\def\<#1\>{\left\langle#1\right\rangle}
\let\Re\undefined
\DeclareMathOperator{\Re}{Re}
\DeclareMathOperator{\Ind}{Ind}
\DeclareMathOperator{\Pe}{P\acute e}
\DeclareMathOperator{\ch}{ch}
\DeclareMathOperator{\id}{id}
\DeclareMathOperator{\vol}{vol}
\newcommand{\dd}[1]{\mathop{\mathrm{d}#1}}
\newcommand{\al}{\alpha}
\newcommand{\ga}{\gamma}
\newcommand{\de}{\delta}
\newcommand{\ep}{\varepsilon}
\newcommand{\la}{\lambda}
\newcommand{\ph}{\varphi}
\newcommand{\Th}{\varTheta}
\renewcommand{\Phi}{\varPhi}
\renewcommand{\Psi}{\varPsi}
\newcommand{\Cas}{\Omega}
\newcommand{\Gam}{\Gamma}
\newcommand{\Lap}{\Delta}
\def\abs{\@ifstar{\abs@star}{\abs@nostar}}
\newcommand{\abs@nostar}[1]{\lvert#1\rvert}
\newcommand{\abs@star}[1]{\left\lvert#1\right\rvert}
\def\Abs{\@ifstar{\Abs@star}{\Abs@nostar}}
\newcommand{\Abs@nostar}[1]{\lVert#1\rVert}
\newcommand{\Abs@star}[1]{\left\lVert#1\right\rVert}
\newcommand{\ts}{^{\mathrm{t}}}
\newcommand{\const}{\mathrm{c}}
\newcommand{\cj}[1]{{\overline{#1}}}
\newcommand{\ade}{\mathbb{A}}
\newcommand{\lqu}{\backslash}
\newcommand{\makegroup}[7][k]{%
  \newcommand{#3}[1][#1]{{#2_{##1}}}%
  \newcommand{#4}{{#2_\ade}}%
  \newcommand{#5}[1][#1]{{#3[##1]\lqu#4}}%
  \newcommand{#6}[1][v]{{#2_{##1}}}
  \newcommand{#7}{{#2_\infty}}}
\makegroup{\Th}{\Sk}{\Sa}{\Sq}{\Sv}{\Sin}
\newcommand{\Kv}{{K_v}}
\newcommand{\Kin}{{K_\infty}}
\newcommand{\GL}{\mathrm{GL}}
\newcommand{\gO}{\mathrm{O}}
\newcommand{\sint}{\int^\oplus}
\let\frk\mathfrak
\let\scr\mathscr
\let\cal\mathcal
\newcommand{\ScD}{\scr D}
\newcommand{\Zg}[1][]{{\scr Z(\frk g#1)}}
\newcommand{\CC}{\mathbb{C}}
\newcommand{\RR}{\mathbb{R}}
\newcommand{\QQ}{\mathbb{Q}}
\newcommand{\HCc}{\mathbf{c}}
\newcommand{\spht}{\widetilde}
\newcommand{\Ft}{\widehat}
\newcommand{\Hauto}[1]{{H_{\textup{auto}}^{#1}}}
\newcommand{\Hzon}[1]{{H_{\textup{zonal}}^{#1}}}
\renewcommand{\thmhead}[3]{\thmnumber{{\rm(#2)}\@ifnotempty{#1#3}%
    { }}\thmname{#1\@ifnotempty{#3}{\,---\,}}\thmnote{#3}}
\newcommand{\thref}{\eqref}
\newtheorem{thm}[equation]{Theorem}
\newtheorem{prop}[equation]{Proposition}
\newtheorem{lem}[equation]{Lemma}
\newtheorem*{thm*}{Theorem}
\newtheorem*{prop*}{Proposition}
\newtheorem*{lem*}{Lemma}
\theoremstyle{definition}
\newtheorem*{defn*}{Definition}
\begin{document}

\title[A spectral identity for Eisenstein series of $\gO(n, 1)$]
  {A spectral identity for second moments of Eisenstein series of $\gO(n, 1)$}
\author{Jo\~ao Pedro Boavida}
\address{Departamento de Matem\'atica\\
         Instituto Superior T\'ecnico\\Universidade de Lisboa\\
         Av.\ Prof.\ Dr.\ Cavaco Silva\\2744--016 Porto Salvo, Portugal}
\email{joao.boavida@tecnico.ulisboa.pt}
\thanks{Published in Illinois J. Math.\ \textbf{57} (2013), no.\ 4,
  1111\ndash 1130. Final submitted version, other than some citation updates.}
\keywords{Eisenstein series, second moments, period, automorphic, fundamental
  solution, Poincar\'e series, $L$-function, orthogonal group}
\subjclass[2010]{Primary 11F67; Secondary 11E45, 11F72, 43A85, 43A90, 46E35}

\begin{abstract}
Let $H = \gO(n) \times \gO(1)$ be an anisotropic subgroup of $G = \gO(n, 1)$ 
and let $\ade$ be the adele ring of $k = \QQ$.  Consider the periods
\begin{equation*}
  (E_\ph, F)_H
  = \int_{\Hq} E_\ph \cdot \cj F,
\end{equation*}
of an Eisenstein series $E_\ph$ on $G$ against a form $F$ on $H$.  Relying on
a variant of Levi--Sobolev spaces, we describe certain Poincar\'e series as 
fundamental solutions for the laplacian, and use them to establish a spectral 
identity concerning the second moments (in $F$-aspect) of $E_\ph$.
\end{abstract}

\maketitle

\section*{Introduction}

Let $k = \QQ$.  Consider the form represented by
\begin{equation*}
  \begin{pmatrix} 1 & & \\ & \id & \\ & &  -1 \end{pmatrix}
\end{equation*}
(here and elsewhere, omitted entries are zero) with respect to the 
decomposition $k^{n+1} = k^n \oplus (k \cdot e_-)
= (k \cdot e_+) \oplus k^{n-1} \oplus (k \cdot e_-)$.  
Let $G = \gO(n+1)$, $H = \gO(n) \times \gO(1)$, and $\Th = \gO(n-1)$, and 
note that $H$ and $\Th$ are $k$-anisotropic.

As the form is isotropic, we consider the hyperbolic pair 
$e' = \frac12 e_+ - \frac12 e_-$ and $e = e_+ + e_-$.  Changing coordinates, 
we see the form is represented by
\begin{equation*}
  \begin{pmatrix} & & 1 \\ & \id & \\ 1 & & \end{pmatrix}
\end{equation*}
with respect to $(k\cdot e') \oplus k^{n-1} \oplus (k\cdot e)$.  We use these
new coordinates for the remainder of the introduction, and observe that
while $H$ has no simple description in these coordinates, $\Th$ can still be
identified with $\gO(n-1)$.

Write
\begin{equation*}
  m_\la
  = \begin{pmatrix} \la & & \\ & \id & \\ & & \la^{-1} \end{pmatrix}
  \qquad\text{and}\qquad
  n_a
  = \begin{pmatrix} 1 & a & -\frac12aa\ts \\
                    & \id & -a\ts \\ & & 1 \end{pmatrix}.
\end{equation*}
The parabolic $P$ stabilizing the isotropic line $k\cdot e$ can be written
as $P = NM$, with unipotent radical $N = \{n_a\}$ and Levi component 
$M = \{m_\la\} \cdot \Th$.  The modular function on $P$ is given by
$\de_P(m_\la) = \abs\la^n$.

Let $\ade$ be the adele ring of $k = \QQ$.  At non-archimedean $v$, choose a 
maximal (open) compact $\Kv$.  At the archimedean place $v = \infty$, put 
$\Kin = \Hin$; it is a maximal compact in $\Gin$.  Write $K = \prod \Kv$; it 
is a maximal compact in $\Ga$.  Let us recapitulate briefly the most salient 
points about the spectral decomposition of (right $K$--invariant) functions in 
$L^2(\Gq/K)$.

The \emph{constant term} of $f \in L^2(\Gq/K)$ is
\begin{equation*}
  \const f(g)
  = \int_{\Nq} f(ng) \dd n.
\end{equation*}
We say $f$ is a \emph{cuspform} if $\const f = 0$; the space $L^2_0(\Gq/K)$ of 
(right $K$--invariant) cuspforms decomposes discretely \cite{God} into joint 
eigenfunctions of the center $\Zg[_\infty]$ of the universal enveloping 
algebra.

The constant term $\const f$ is left $\Na\Mk$--invariant.  If 
$\ph \in \ScD(\Na\Mk\lqu\Ga/K)$ is a test function, we have
\begin{equation*}\begin{split}
  \int_{\Na\Mk\lqu\Ga/K} \const f(g) \, \ph(g) \dd g
  &= \int_{\Na\Mk\lqu\Ga/K} \int_{\Nq} f(ng) \dd n \, \ph(g) \dd g \\
  &= \int_{\Pk\lqu\Ga/K} f(g) \, \ph(g) \dd g\\
  &= \int_{\Gq/K} f(g) \, E_\ph(g) \dd g,
\end{split}\end{equation*}
where
\begin{equation*}
  E_\ph(g)
  = \sum_{\ga \in \Pk\lqu\Gk} \ph(\ga g)
\end{equation*}
(the sum has finitely many nonzero terms) is a 
\emph{pseudo-Eisenstein series}.  Observing that $\Na\Mk = \Mk\Na$ and taking
the Iwasawa decomposition $\Ga = \Na\Ma K$ into account, we see  the right
$K$--invariant functions on $\Na\Mk\lqu\Ga$ are the right $K\cap\Ma$--invariant
functions on $\Mq$.

Recall that $M \cong \Th \times \GL(1)$ and that, because
$\Th$ is $k$-anisotropic, $\Sq$ is compact.  Let $\Psi$ run over an orthonormal
basis of $L^2(\Sq/(K\cap\Sa))$.  Let also $\la \mapsto \de_P(\la)^s$ be a
character of $\GL(1)$ (with $k = \QQ$, there are no other characters to 
account for).  Extend
\begin{equation*}
  \ph_{s,\Psi}(m_\la \theta)
  = \de_P(\la)^s \cdot \Psi(\theta)
  = \abs\la^{ns} \cdot \Psi(\theta)
\end{equation*}
by left $\Na$-- and right $K$--invariance, and define the \emph{Eisenstein
series} as the meromorphic continuation of
\begin{equation*}
  E_{s,\Psi} (g)
  = \sum_{\ga \in \Pk\lqu\Gk} \ph_{s,\Psi}(\ga g).
\end{equation*}
to all $\CC$.  (We will not go into the details, but the sum converges if 
$\Re s > 1$ and does have a meromorphic extension \cite{Lan}.)  If $\Psi = 1$, 
we write simply $\ph_s = \ph_{s,\Psi}$ and $E_s = E_{s,\Psi}$.

Given a function $f$ in $L^2(\Gq/K)$, we have \cites{Ar, Lan, MoWa}
\begin{equation*}
  f
  = \sum_{\Phi} \<f, \Phi\> \cdot \Phi
    + \frac1{4\pi i}\sum_\Psi
      \int_{\Re s = \frac12}\<f, E_{s,\Psi}\> \cdot E_{s,\Psi} \dd s
    + \sum_R \<f, R\> \cdot R
\end{equation*}
with $\Phi$ running over an orthonormal basis of $L^2_0(\Gq/K)$, $\Psi$ over an 
orthonormal basis of $L^2(\Sq/(K\cap\Sa))$, and $R$ over an orthonormal basis 
of residues of Eisenstein series to the right of $\Re s = \frac12$.  (The inner
products are integrals over $\Gq$.)

We choose 
each component to be a joint eigenvector of $\Zg[_\infty]$.  The corresponding 
Plancherel identity is
\begin{equation*}
  \Abs f^2_{L^2}
  = \sum_\Phi\abs*{\<f, \Phi\>}^2
    + \frac1{4\pi i} \sum_\Psi
      \int_{\Re s = \frac12}\abs*{\<f, E_{s,\Psi}\>}^2\dd s
    + \sum_R\abs*{\<f, R\>}^2
\end{equation*}
(We note that the Eisenstein series themselves are not in $L^2$, therefore the 
inner product and integral are obtained  by isometric extension.)

In what follows, we shorten these formulas to read
\begin{equation}\label{e:spec}
  f
  = \sint \<f, \Phi\> \cdot \Phi \dd\Phi
  \qquad\text{and}\qquad
  \Abs f^2_{L^2}
  = \sint\abs*{\<f, \Phi\>}^2 \dd\Phi
\end{equation}
(when writing thusly, $\Phi$ runs over all relevant spectral components).

We may consider the periods
\begin{equation*}
  (\Phi, F)_H
  = \int_{\Hq} \Phi \cdot \cj F
\end{equation*}
of spectral components $\Phi$ on $G$ against cuspforms $F$ on $H$, or even
\begin{equation*}
  (\Phi)_H
  = (\Phi, 1)_H
  = \int_{\Hq} \Phi.
\end{equation*}
Such periods contain information about the underlying representations.  These 
same periods (called there global Shintani functions) were used by Katu,
Murase, and Sugano \cites{KaMuSu, MuSu} to obtain and study integral expressions 
for standard $L$-functions of the orthogonal group.  And the Gross--Prasad 
conjecture \cites{GrPr1, GrPr2, GrPr3} predicts that a representation of 
$\gO(n)$ occurs in a representation of $\gO(n+1)$ if and only if the 
corresponding tensor product $L$-function is nonzero on $\Re s = \frac12$.  
Ichino and Ikeda \cite{IcIk} discuss further details and 
broader context is provided in papers by Gross, Reeder \cite{GrRe}, Jacquet, 
Lapid, Offen, and/or Rogawski \cites{JaLaRo, LaRo, LaOf}, Jiang \cite{Ji},
and Sakellaridis and Venkatesh \cites{Sak, SaVe}.

The periods also help study the asymptotics of moments of automorphic
$L$-functions.  Often, the Phragm\'en--Lindel\"of principle yields (so-called) 
\emph{convex} bounds for such asymptotics \cites{BeRe3, IwSa}.  Diaconu and Garrett 
\cites{DiGa1, DiGa2} used a specific spectral identity to first break convexity 
for the asymptotics of second moments of automorphic forms in $\GL(2)$, over 
\emph{any} number field $k$.  In fact, their strategy produces families of 
spectral identities, explored in other papers by them and/or Goldfeld 
\cites{DiGa2, DiGa3, DiGaGo} and used by Letang \cite{Le}.
In the present paper, we carry out that strategy to obtain a spectral identity
for second moments of Eisenstein series of $\gO(n, 1)$.

Given a function $f \in L^2(\Gq/K)$, the spectral decomposition \eqref{e:spec} 
above invites us to consider the effect of an operator $X \in \Zg[_\infty]$:
\begin{equation}\label{e:eigen}
  Xf
  = \sint \<f, \Phi\> \cdot \la_{X,\Phi} \cdot \Phi \dd\Phi
  \qquad\text{and}\qquad
  \Abs{Xf}^2_{L^2}
  = \sint\abs*{\<f, \Phi\>}^2 \, \abs{\la_{X,\Phi}}^2 \dd\Phi,
\end{equation}
where $\la_{X,\Phi}$ is the $X$-eigenvalue of $\Phi$ (if $X = \Cas$, 
we write simply $\la_\Phi=\la_{\Cas,\Phi}$).  The 
conditions for these decompositions to converge (even in the sense of isometric 
extensions) are most naturally discussed in the context of automorphic Sobolev 
spaces.  The literature on \emph{automorphic} Sobolev spaces is scarce; it 
includes papers by Bernstein and Reznikov \cites{BeRe1, BeRe2}, Kr\"otz and 
Stanton \cite{KrSt}, and Michel and Venkatesh \cite{MiVe}, as well as 
Garrett's \cite{Ga3} notes and DeCelles's \cite{De} very detailed discussion.  
We discuss them (and 
their zonal counterparts) in sections \ref{s:auto-Sob} and \ref{s:zonal-Sob},
following the approach in the author's dissertation \cite{Bo1}.

The \emph{automorphic Sobolev spaces} we discuss in section \ref{s:auto-Sob} 
are closures (with respect to the relevant norms) of the space $\ScD(\Gq)$ of 
\emph{global} test functions. Even though we only take into account the 
eigenvalues of $\Cas$ in their definition, we rely on a global spectral 
decomposition, and the norms are defined from integrals over $\Gq$. So we 
should see these spaces as spaces of global functions.

A crucial point is that, using a pre-trace kernel, we can obtain an
estimate
\begin{equation*}
  \sint_{\abs{\la_\Phi} < T^2} \abs{\Phi(g)}^2 \ll T^n
\end{equation*}
similar to Weyl's Law, from which we can characterize an automorphic
delta $\de_\ade$.  Then, it is just a matter of using the techniques one
habitually uses with classical Sobolev spaces to obtain fundamental solutions 
of PDEs.

By contrast, the \emph{zonal Sobolev spaces} we discuss in section 
\ref{s:zonal-Sob} are closures of test functions on $\Kin\lqu\Gin/\Kin$; these
are local (archimedean) functions. From them, we shall obtain a different
construction of the (global) fundamental solutions just mentioned, which will
help us extract some archimedean information.

In section \ref{s:Pe}, we use those techniques to obtain fundamental 
solutions (following Diaconu and Garrett \cite{DiGa1}, we call them 
\emph{Poincar\'e series}) for certain polynomials
in $\Cas$.  The spectral decomposition of these Poincar\'e series $\Pe$ 
involves the periods $(\Phi)_H$ discussed above.  Given an automorphic function
$f \otimes f'$ on $G \times G$, we expand $\<f \cdot f', \Pe\>_G$ in two
distinct ways, yielding an identity between a spectral expansion (along $G$)
and a moment expansion (in $F$-aspect, with $F$ running over an orthonormal
basis of cuspforms on $H$).

In section \ref{s:Eis}, we apply those ideas to Eisenstein series.  In 
particular, we see how the moment expansion involves the second moments of 
the Eisenstein series in $F$-aspect, as well as the periods of Eisenstein
series.  (Elsewhere \cites{Bo1, Bo2, Bo3}, this author has computed these 
periods at non-archimedean primes.  As discussed there, for the cases used in 
the present paper, the local factor at the archimedean place is $1$.)

In appendix \ref{s:regularize}, we explain the regularization used in section
\ref{s:Eis}.

\tableofcontents

\subsection*{Acknowledgements}

This paper is based on part of the author's doctoral dissertation \cite{Bo1}, 
done under the supervision of Paul Garrett, and might well not exist without
his continued encouragement since then.  It is influenced by discussions 
with and talks by him \cite{Ga2}, as well as multiple class discussions with 
Adrian Diaconu about their joint work~\cites{DiGa1, DiGa2, DiGa3, Ga25}.  The
author thanks also the referee for a few corrections and suggestions.

\section{Automorphic Sobolev spaces}\label{s:auto-Sob}

In the continuation, we will rely heavily on some $L^2$ Sobolev spaces, 
adapted to the automorphic case.
Classically, the Sobolev space of order $\ell$ is defined as 
the space of functions whose weak derivatives up to order $\ell$ are 
square-integrable.  The topology induced by that family of seminorms (one
for each derivative up to order $\ell$) can also be described by a norm 
obtained from Plancherel formula.  For example, in $\RR^n$, we set
\begin{equation*}
  \Abs{f}^2_{H^\ell}
  = \int_{\RR^n} \abs{\Ft f(\xi)}^2 \, (1+\abs\xi^2)^\ell \dd\xi.
\end{equation*}
Under Fourier transform, the Laplacian $\Lap$ acts (up to a constant) by
multiplication by $\abs\xi^2$.  
In the Plancherel identity, the effect of $\Lap$
is as described in~\eqref{e:eigen}.

In our case, the effect of the Casimir element $\Cas$ of 
$\Gin$ on the Plancherel identity is also as in \eqref{e:eigen}.
Thus, with inner products obtained from integrals over $\Gq$ (or by
isometric extension), we define the automorphic Sobolev norm by
\begin{equation*}
  \Abs{f}^2_\ell
  = \sint\abs{\<f, \Phi\>}^2 \, (1 + \abs{\la_\Phi})^\ell \dd\Phi
\end{equation*}
and the \emph{automorphic Sobolev space} as
\begin{equation}
  \Hauto\ell
  = \,\text{closure of $\ScD(\Gq)$ with respect to $\Abs\;_\ell$.}
\end{equation}
We are specifically interested in the effect of the center $\Zg[_\infty]$ of 
the universal enveloping algebra (and the corresponding differential operators),
so the only modification to the usual $L^2$ norm involves only archimedean
information. However, the norm itself depends on the global automorphic 
spectral decomposition.

For $\ell > 0$, as usual, $\Hauto{-\ell}$ is the dual of $\Hauto\ell$.
Let $f \in \ScD(\Gq) \cap \Hauto{-\ell}$ and
$\ph \in \ScD(\Gq) \cap \Hauto\ell$.  In the expanded notation,
we define $\<f, \ph\>$ by
\begin{equation*}
  \sum_\Phi \<f, \Phi\> \cj{\<\ph, \Phi\>}
    + \frac1{4\pi i} \sum_\Psi \int_{\Re s=\frac12} 
      \<f, E_{s,\Psi}\> \cj{\<\ph, E_{s,\Psi}\>} \dd s
    + \sum_R \<f,R\>\cj{\<\ph, R\>}.
\end{equation*}
From Cauchy--Schwarz--Bunyakowsky, we obtain (now in the compressed 
notation)
\begin{equation*}\begin{split}
  \<f,\ph\>
  &=\sint \<f, \Phi\> \cj{\<\ph, \Phi\>} \dd\Phi\\
  &\ll \sint \abs{\<f, \Phi\>} \; (1 + \abs{\la_\Phi})^{-\ell/2}
      \cdot \abs{\<\ph, \Phi\>} \; (1 + \abs{\la_\Phi})^{\ell/2} \dd\Phi\\
  &\ll \sqrt{\sint\abs{\<f, \Phi\>}^2 \, (1 + \abs{\la_\Phi})^{-\ell} \dd\Phi}
      \cdot\sqrt{\sint\abs{\<\ph, \Phi\>}^2
        \, (1 + \abs{\la_\Phi})^\ell \dd\Phi}\\
  &= \Abs{f}_{-\ell}\cdot\Abs{\ph}_\ell.
\end{split}\end{equation*}

\begin{prop}
With $X = \Gin/\Kin$ and $n = \dim_\RR X$, we have
\begin{equation*}
  \sint_{\abs{\la_\Phi}<T^2}\abs{\Phi(g)}^2
  \ll T^n.
\end{equation*}
\end{prop}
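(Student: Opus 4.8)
The plan is to run a pre-trace argument. Fix $g$, and let $\mu$ denote the spherical spectral parameter for $X = \Gin/\Kin$ (real hyperbolic $n$-space), so that $\la_\Phi$ is, up to an additive constant and a sign, the square of $\mu$; thus $\mu$ is real on the tempered spectrum, imaginary of bounded size on the complementary and residual parts, and $\abs{\la_\Phi} < T^2$ forces $\abs\mu \ll T$. For $T \ge 1$ let $k_T$ be the bi-$\Kin$-invariant function on $\Gin$ with spherical transform $\Ft k_T(\mu) = e^{-\mu^2/T^2}$, extended to $\Ga$ by tensoring with the characteristic function of the non-archimedean part of $K$; write $\Ft k_T(\la_\Phi)$ for its value on the component $\Phi$. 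Two features of this choice matter. First, $\Ft k_T$ is strictly positive on \emph{every} spherical type occurring in \eqref{e:spec} — a Gaussian on the principal series, and $e^{\abs\mu^2/T^2} \ge 1$ on the (bounded) complementary strip and on the residual part — with $\Ft k_T(\la_\Phi) \gg 1$ uniformly whenever $\abs{\la_\Phi} < T^2$. Second, $\Ft k_T$ decays fast enough in the real direction for the spectral side below to converge.

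Next, form the automorphic kernel $K_T(x,y) = \sum_{\ga \in \Gk} k_T(x^{-1}\ga y)$, a function on $\Gq \times \Gq$. Unwinding it against the spectral decomposition \eqref{e:spec} — cuspidal, continuous, and residual parts all included — gives
\begin{equation*}
  K_T(x,y) = \sint \Ft k_T(\la_\Phi)\, \Phi(x)\,\cj{\Phi(y)} \dd\Phi .
\end{equation*}
Setting $x = y = g$ and using $\Ft k_T \ge 0$ everywhere together with $\Ft k_T(\la_\Phi) \gg 1$ on the window,
\begin{align*}
  \sint_{\abs{\la_\Phi} < T^2} \abs{\Phi(g)}^2 \dd\Phi
  &\ll \sint \Ft k_T(\la_\Phi)\, \abs{\Phi(g)}^2 \dd\Phi \\
  &= K_T(g,g) = \sum_{\ga \in \Gk} k_T(g^{-1}\ga g) ,
\end{align*}
so it remains to bound the geometric side by $\ll T^n$.

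The identity contribution is $k_T(1)$, which by spherical inversion on $\Gin$ equals a positive multiple of $\int_\RR \Ft k_T(\mu)\, \abs{c(\mu)}^{-2}\dd\mu$; since the Plancherel density of $X$ satisfies $\abs{c(\mu)}^{-2} \asymp (1+\abs\mu)^{n-1}$ and $\Ft k_T(\mu) = e^{-\mu^2/T^2}$ is concentrated on $\abs\mu \ll T$, the substitution $\mu = T\nu$ gives $k_T(1) \asymp T^n$ — exactly where the exponent $n = \dim_\RR X$ enters. For $\ga \ne e$, the non-archimedean factor of $k_T$ confines the sum to a discrete subset of $\Gin$ for which the number of $\ga$ with $d(g,\ga g) < R$ is $\ll e^{(n-1)R}$ ($d$ being the distance on $X$), while $k_T$, being the heat kernel on $X$ at time $T^{-2}$ up to polynomial factors, satisfies $k_T(g^{-1}\ga g) \ll T^n\, e^{-T^2 d(g,\ga g)^2/4}$. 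The Gaussian in $R$ overwhelms the exponential lattice growth, so $\sum_{\ga \ne e} k_T(g^{-1}\ga g) \ll_g 1$ for $T \ge 1$. Adding the two contributions yields $K_T(g,g) \ll_g T^n$, which is the claim (with an implied constant depending on $g$, uniform on compacta).

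I expect two points to need genuine care. The first is the legitimacy of the pre-trace expansion in this non-compact automorphic setting: the continuous and residual spectra are present in \eqref{e:spec}, and comparing the convergent spectral side with the restricted integral term-by-term relies on the decay of $\Ft k_T$ in $\mu$ against the polynomial growth of the Eisenstein series. The second, and more quantitatively delicate, is securing enough \emph{spatial} decay of $k_T$: the bare $e^{-\rho r}$ decay of spherical functions is not quite enough to sum over a lattice in hyperbolic $n$-space, and it is the additional Gaussian factor $e^{-T^2 r^2/4}$ in the heat-kernel profile that makes the off-diagonal sum negligible. The remaining inputs — the Plancherel asymptotics $\abs{c(\mu)}^{-2} \asymp (1+\abs\mu)^{n-1}$ and the exponential lattice-point bound — are classical for rank-one symmetric spaces.
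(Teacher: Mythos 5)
Your argument is correct in substance, but it takes a genuinely different route from the paper's. The paper also automorphizes a bi-$\Kin$-invariant test function concentrated at scale $1/T$, but it uses the sharp cutoff $\eta = \ch_{B\Kin}\otimes\bigotimes_{v<\infty}\ch_{\Kv}$, with $B$ a ball of radius $1/T$ in $\Pin$: then $\Abs{\eta_g}^2_{L^2(\Gq)} \ll T^{-n}$ simply because the support has volume $\asymp T^{-n}$ (the radius is taken small enough that the automorphizing sum $\eta_g(h)=\sum_{\ga\in\Gk}\eta(g^{-1}\ga h)$ has a single term for $g$ in a fixed compact), while $\<\eta_g,\Phi\> = C\,\cj\Phi(g)$ with $C \gg T^{-n}$ whenever $\abs{\la_\Phi}<T^2$, because the relevant principal-series vector is $\gg 1$ on the small ball when the archimedean parameter is $\ll T$; Plancherel--Bessel then gives the claim with no geometric-side analysis at all. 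Your version replaces this by the heat-type kernel with $\Ft k_T(\mu)=e^{-\mu^2/T^2}$ and extracts $T^n$ from the Plancherel density $\abs{\HCc}^{-2}\asymp (1+\abs{\mu})^{n-1}$, at the cost of having to control the off-diagonal geometric terms (Gaussian spatial decay against the $e^{(n-1)R}$ lattice-point count). What your choice buys is a spectral weight that is manifestly positive and $\gg 1$ on the whole window, including complementary and residual parameters, with good uniformity in $T$; what the paper's choice buys is complete avoidance of spherical inversion, heat-kernel estimates, and counting arguments.

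Two small repairs. First, rather than asserting pointwise equality of the full pre-trace expansion at $x=y=g$, it is cleanest to take $k_T$ to be the convolution square of a bi-$\Kin$-invariant $h_T$ with $\Ft h_T(\mu)=e^{-\mu^2/(2T^2)}$: then $\sint \abs{\Ft h_T(\la_\Phi)}^2\,\abs{\Phi(g)}^2 \dd\Phi \le \Abs{(h_T)_g}^2_{L^2(\Gq)} = \sum_{\ga\in\Gk} k_T(g^{-1}\ga g)$ follows from Bessel's inequality alone, which is exactly the mechanism the paper exploits with $\eta$. Second, your claim that the off-diagonal sum is $\ll_g 1$ fails if some $\ga\ne 1$ fixes (or nearly fixes) the point $g\Kin$, i.e.\ at elliptic points, where such terms are of size $\asymp T^n$; but since $\abs{k_T(x)}\le k_T(1)\ll T^n$ and only boundedly many such $\ga$ occur for $g$ in a fixed compact, the final bound $\sum_{\ga}k_T(g^{-1}\ga g)\ll_g T^n$, and hence the proposition, is unaffected.
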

(This is unsurprising, in light of Weyl's Law \cites{Do, Iw, LiVe}.)

\begin{proof}
We follow Garrett \cite{Ga2}, with adjustments to a different group and
attempting to avoid tedious computations.

We use the ``ball''
\begin{equation*}
  B
  = \Bigl\{n_a \, m_\la : \max \abs{a_i} < \frac1T
         \quad\text{and}\quad \abs{\log\la} < \frac1T\Bigr\}
\end{equation*}
of radius $1/T$ in $\Pin$.  Then $B \Kin$ is a tubular neighborhood of $\Kin$
in $\Gin$.  Considering the action by 
$\eta = \ch_{B\Kin} \otimes \bigotimes_{v<\infty}\ch_{\Kv}$, we have
\begin{equation*}\begin{split}
  (\eta\cdot f)(g)
  &=\int_{\Ga} \eta(h) \, f(gh) \dd h
  = \int_{\Ga} \eta(g^{-1}h) \, f(h) \dd h\\
  &=\int_{\Gq} \sum_{\ga\in\Gk} \eta(g^{-1}\ga h) \, f(h) \dd h
  = \<\eta_g, \cj f\>,
\end{split}\end{equation*}
where
\begin{equation*}
  \eta_g(h)
  = \sum_{\ga\in\Gk} \eta(g^{-1}\ga h).
\end{equation*}
If the radius $1/T$ is sufficiently small, this sum has one single term.
(Note that $\eta$ is not smooth; such a choice avoids cut-off functions.)  
Still following Garrett,
\begin{equation*}\begin{split}
  \Abs{\eta_g}^2
  &=\int_{\Gq} \eta_g(h) \; \sum_{\ga\in\Gk} \cj\eta(g^{-1}\ga h) \dd h
  = \int_{\Ga} \eta_g(h) \, \cj\eta(g^{-1}h) \dd h\\
  &=\int_{\Ga} \sum_{\ga\in\Gk} \eta(g^{-1}\ga gh) \, \cj\eta(h) \dd h.
\end{split}\end{equation*}
We are led to
\begin{equation*}
  \Abs{\eta_g}^2 \ll \text{radius}^n = \frac1{T^n}.
\end{equation*}

On the other hand, because $\Phi$ is right $K$--invariant and generates 
an irreducible representation, it must be that
\begin{equation*}
  \<\eta_g, \Phi\>
  = (\eta \cdot \cj\Phi)(g)
  = C \, \cj\Phi(g),
\end{equation*}
where the constant $C$ depends only on $\eta$ and the archimedean parameters 
of $\Phi$.  Let $s$ be the archimedean parameter, seen as the parameter of 
a principal series representation $\ph_s$ at $\infty$.  Then, if 
$n_a m_\la \in B$, $k \in \Kin$, and $s \ll T$ (which is the case if 
$\abs{\la_\Phi} < T^2$, as $\la_\Phi \asymp s^2$), we have
\begin{equation*}
  \ph_s(n_a m_\la k)
  = \de(m_\la)
  = \abs\la^{ns}
  < e^{ns/T}
  \ll 1.
\end{equation*}
In particular, assume that $g$ lies in a fixed 
compact and the radius $1/T$ is sufficiently small.  Then
\begin{equation*}
  (\eta \cdot \cj\Phi)(g)
  = \int_{\Ga} \eta(h) \, \cj\Phi(gh) \dd h
  \gg {\int_{\Ga} \eta(h) \dd h} \cdot \cj\Phi(g)
  \asymp \text{radius}^n \cdot \cj\Phi(g)
\end{equation*}
and we see that
\begin{equation*}
  C \gg \text{radius}^n = \frac1{T^n}.
\end{equation*}

Combining all this information, we conclude
\begin{equation*}
  \frac1{T^n}
  \gg \Abs{\eta_g}^2
  = \sint \abs*{\<\eta_g,\Phi\>}^2\dd\Phi
  \ge \sint_{\abs{\la_\Phi}<T^2} \abs*{\<\eta_g,\Phi\>}^2
  \gg \sint_{\abs{\la_\Phi}<T^2} \frac{\abs{\Phi(g)}^2}{T^{2n}}.\qedhere
\end{equation*}
\end{proof}

\begin{lem}\label{l:delta-ade}
Let $\de_\ade$ be the distribution defined, for right $K$--invariant $f$,
by
\begin{equation*}
  \int_{\Gq} f\cdot\de_\ade
  = f(1).
\end{equation*}
We have $\de_\ade\in \Hauto{-n/2-\ep}$, for any $\ep>0$.
\end{lem}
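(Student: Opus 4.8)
The plan is to show $\de_\ade$ lies in the dual space $\Hauto{-n/2-\ep}$ by testing it against global test functions $\ph \in \ScD(\Gq)$ and bounding $\abs{\ph(1)}$ by a multiple of $\Abs{\ph}_{n/2+\ep}$, uniformly in $\ph$; since $\ScD(\Gq)$ is dense in every $\Hauto\ell$, such a bound extends $\de_\ade$ to a continuous functional on $\Hauto{n/2+\ep}$. Concretely, I would write the spectral expansion $\ph = \sint \<\ph, \Phi\> \cdot \Phi \dd\Phi$ and evaluate at $g = 1$, getting $\ph(1) = \sint \<\ph, \Phi\> \cdot \Phi(1) \dd\Phi$; so $\de_\ade$ has ``spectral coefficients'' $\cj{\Phi(1)}$, and membership in $\Hauto{-n/2-\ep}$ amounts to the convergence of $\sint \abs{\Phi(1)}^2 (1+\abs{\la_\Phi})^{-n/2-\ep} \dd\Phi$.

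The key step is to control $\sint \abs{\Phi(1)}^2 (1+\abs{\la_\Phi})^{-n/2-\ep}\dd\Phi$ using the Weyl-type bound just proved. I would break the spectral integral into dyadic pieces $2^{j} \le 1 + \abs{\la_\Phi} < 2^{j+1}$ for $j \ge 0$ (plus the bounded piece $\abs{\la_\Phi} < 1$, which contributes a finite amount since the relevant spectrum is discrete there or contributes a convergent tail). On the $j$-th piece, the Proposition with $g = 1$ and $T^2 \asymp 2^{j+1}$ gives $\sint_{\abs{\la_\Phi} < 2^{j+1}} \abs{\Phi(1)}^2 \ll 2^{jn/2}$, and hence by partial summation $\sint_{2^j \le 1+\abs{\la_\Phi} < 2^{j+1}} \abs{\Phi(1)}^2 \ll 2^{jn/2}$ as well. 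Multiplying by the weight $(1+\abs{\la_\Phi})^{-n/2-\ep} \asymp 2^{-j(n/2+\ep)}$ on that piece and summing over $j$ yields $\sum_{j \ge 0} 2^{jn/2} \cdot 2^{-j(n/2+\ep)} = \sum_{j\ge 0} 2^{-j\ep} < \infty$. This is exactly where the threshold $n/2 + \ep$ (rather than $n/2$) is needed: the geometric series converges precisely for $\ep > 0$.

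Having shown the series converges, Cauchy--Schwarz in the form already displayed before the Proposition gives
\begin{equation*}
  \abs{\ph(1)}
  = \abs[\Big]{\sint \<\ph, \Phi\> \cdot \Phi(1) \dd\Phi}
  \le \Abs{\ph}_{n/2+\ep} \cdot \Bigl(\sint \abs{\Phi(1)}^2 (1+\abs{\la_\Phi})^{-n/2-\ep}\dd\Phi\Bigr)^{1/2}
  \ll \Abs{\ph}_{n/2+\ep},
\end{equation*}
so $\de_\ade$ extends by continuity (equivalently, by the isometric extension used throughout) to an element of $\Hauto{-n/2-\ep}$, with spectral coefficients $\<\de_\ade, \Phi\> = \cj{\Phi(1)}$.

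The main obstacle is not the dyadic summation itself but the transfer from the $g$-pointwise estimate of the Proposition to a statement about $\de_\ade$ that is uniform in the test function: one must be careful that $\Phi(1)$ is well-defined (the spectral components are continuous functions, being $K$-invariant eigenfunctions of the archimedean algebra, so point evaluation makes sense) and that the partial-summation step converting the ``cumulative'' bound $\sint_{\abs{\la_\Phi}<T^2}$ into a ``dyadic-shell'' bound does not lose anything, which it does not since both sides are nonnegative. A minor point to address is the contribution of the residual and small-eigenvalue parts of the spectrum, but these are harmless: the residues $R$ and low-lying cuspidal $\Phi$ form a set on which $\abs{\Phi(1)}^2$ is summable by the same Proposition applied at a fixed bounded $T$.
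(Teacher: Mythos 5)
Your argument is correct and is essentially the paper's proof: both reduce membership of $\de_\ade$ in $\Hauto{-n/2-\ep}$ to the convergence of $\sint \abs{\Phi(1)}^2 (1+\abs{\la_\Phi})^{-n/2-\ep}\dd\Phi$ and establish it from the Weyl-type bound of the preceding Proposition, you by dyadic shells and the paper by unit-width shells with summation by parts, which is an immaterial difference. The duality/Cauchy--Schwarz framing you add is exactly the pairing the paper sets up just before the Proposition, so nothing further is needed.
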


(As the definition of $\Hauto\ell$ depends on the \emph{global} spectral 
decomposition, it is not possible to reduce this to classical lemmas, of
which it is a direct analogue.)

\begin{proof}
Indeed, let
\begin{equation*}
  a_N = \sint_{\abs{\la_\Phi}<N^2}\abs*{F(1)}^2.
\end{equation*}
Then
\begin{equation*}\begin{split}
  \sint\frac{\abs*{\<\de_\ade,\Phi\>}^2}{(1+\abs{\la_\Phi})^{n/2+\ep}}\dd\Phi
  &\ll \sum_{N\ge0}\frac{a_{N+1}-a_N}{(1+N)^{n+2\ep}}\\
  &= \sum_{N\ge0}
     a_N\biggl(\frac1{N^{n+2\ep}}-\frac1{(N+1)^{n+2\ep}}\biggr)\\
  &\ll \sum_{N\ge0}\frac{a_N}{N^{n+1+\ep}}
  \ll \sum_{N\ge0}\frac{N^n}{N^{n+1+\ep}}
  < \infty.
\end{split}\end{equation*}

Therefore, in $\Hauto{-n/2-\ep}$, we have
\begin{equation*}
  \de_\ade
  = \sint \Phi(1) \cdot \Phi\dd\Phi.\qedhere
\end{equation*}
\end{proof}

\begin{prop}\label{p:auto-Sob:sol}
If $\abs{\la_\Phi - \la} \ge r > 0$ for all $\Phi$,
then $(\Cas - \la) : \Hauto\ell \to \Hauto{\ell-2}$ is an isomorphism.
\end{prop}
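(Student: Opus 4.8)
The plan is to exhibit an explicit inverse on the spectral side and check that it is bounded as a map $\Hauto{\ell-2} \to \Hauto\ell$, and then extend to the closures by density. Concretely, for $f$ given by the spectral expansion $f = \sint \<f,\Phi\> \cdot \Phi \dd\Phi$, set
\begin{equation*}
  S f = \sint \frac{\<f, \Phi\>}{\la_\Phi - \la} \cdot \Phi \dd\Phi.
\end{equation*}
On test functions $f \in \ScD(\Gq)$ — which are dense in every $\Hauto\ell$ by definition — this is manifestly a two-sided inverse of $(\Cas - \la)$, since $\Cas$ acts on the component $\Phi$ by $\la_\Phi$ (each spectral component having been chosen to be a $\Zg[_\infty]$-eigenvector, in particular a $\Cas$-eigenvector). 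So the content is entirely the boundedness estimate, which follows from the separation hypothesis: because $\abs{\la_\Phi - \la} \ge r > 0$ for all $\Phi$, we have the pointwise (in $\Phi$) bound
\begin{equation*}
  \frac{(1 + \abs{\la_\Phi})^\ell}{\abs{\la_\Phi - \la}^2}
  \ll_{\la, r} (1 + \abs{\la_\Phi})^{\ell - 2},
\end{equation*}
valid uniformly over the spectrum. Multiplying by $\abs{\<f,\Phi\>}^2$ and integrating $\dd\Phi$ gives $\Abs{S f}_\ell^2 \ll \Abs{f}_{\ell-2}^2$, so $S$ extends to a bounded operator $\Hauto{\ell-2} \to \Hauto\ell$; similarly $(\Cas - \la)$ is bounded $\Hauto\ell \to \Hauto{\ell-2}$ by the reverse elementary inequality $\abs{\la_\Phi - \la}^2 \ll (1 + \abs{\la_\Phi})^2$.

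The main obstacle is not any single inequality but making sure the spectral calculus is legitimate on the closures rather than only on $\ScD(\Gq)$. Two points need care. First, one must know that $(\Cas - \la)$ genuinely maps $\ScD(\Gq)$ into $\Hauto{\ell-2}$ and commutes with the spectral expansion — this is where the unitarity of the spectral decomposition (so that $\Cas$ acts diagonally) and the fact that each $\Phi$, $E_{s,\Psi}$, $R$ is a $\Cas$-eigenvector are used; for the continuous part the eigenvalue $\la_s = s(s - 1) \cdot \text{const}$ varies with $s$, but is still real and still bounded away from $\la$ by hypothesis, so the estimate is uniform in $s$ as well. Second, once $S$ is defined by isometric extension, one verifies $(\Cas - \la) S = \id$ and $S (\Cas - \la) = \id$ on the dense subspace $\ScD(\Gq)$ and then invokes continuity of both composites to conclude the identities hold on all of $\Hauto{\ell-2}$ and $\Hauto\ell$ respectively. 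That upgrades the algebraic inverse to a genuine topological isomorphism of Banach (indeed Hilbert) spaces.

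I would present this as follows: first state the candidate inverse $S$ on the spectral side; second record the two elementary weight inequalities and deduce $\Abs{S f}_\ell \ll \Abs f_{\ell-2}$ and $\Abs{(\Cas - \la) f}_{\ell-2} \ll \Abs f_\ell$ on $\ScD(\Gq)$; third extend both maps to the closures by density and continuity; fourth check $(\Cas - \la) S = S (\Cas - \la) = \id$ on $\ScD(\Gq)$ and pass to the closure. The only place requiring a touch of vigilance is that the implied constants in the weight inequalities depend on $\la$ and $r$ (and mildly on $\ell$), but not on $\Phi$, which is exactly what the hypothesis $\abs{\la_\Phi - \la} \ge r$ guarantees — without a positive lower bound $r$ the operator $S$ would still be defined formally but would fail to be bounded near any accumulation point of the spectrum at $\la$.
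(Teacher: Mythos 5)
Your proof is correct and takes essentially the same route as the paper's: both boundedness of $(\Cas-\la)$ and boundedness of its spectral inverse reduce to the two weight inequalities $\abs{\la_\Phi-\la}\ll 1+\abs{\la_\Phi}$ and, via the separation hypothesis, $1+\abs{\la_\Phi}\le 1+\abs{\la_\Phi-\la}+\abs\la\ll_{\la,r}\abs{\la_\Phi-\la}$. The paper simply leaves the explicit inverse $S$ and the density/extension bookkeeping implicit, which you have spelled out.
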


\begin{proof}
For $\la\in\CC$ and $f\in \Hauto\ell$, we have
\begin{equation*}\begin{split}
  \Abs{(\Cas-\la) f}_{\ell-2}^2
  &= \sint \abs{\<f, \Phi\> \; (\la_\Phi-\la)}^2
     \, (1 + \abs{\la_\Phi})^{\ell-2} \dd\Phi\\
  &\ll \sint\abs{\<f, \Phi\>}^2 \, (1 + \abs{\la_\Phi})^\ell \dd\Phi
  = \Abs{f}^2_\ell,
\end{split}\end{equation*}
showing that $(\Cas-\la) : \Hauto\ell \to \Hauto{\ell-2}$ is continuous and 
injective.
On the other hand, if $\abs{\la_\Phi-\la}\ge r>0$ for all $\Phi$, then
\begin{equation*}
  1+\abs{\la_\Phi}
  \le 1+\abs{\la_\Phi-\la}+\abs\la
  \ll \abs{\la_\Phi-\la}.\qedhere
\end{equation*}
\end{proof}

For example, there is a unique solution $u_\ade$ of
\begin{equation*}
  (\Cas-\la)^N u_\ade = \de_\ade,
\end{equation*}
for $\de_\ade$ as defined above.
In $\Hauto{2N-n/2-\ep}$, it can be expressed as
\begin{equation}
  u_\ade
  = \sint \frac{\Phi(1)}{(\la_\Phi-\la)^N}\cdot \Phi\dd\Phi.
\end{equation}

\section{Zonal spherical functions}\label{s:zonal-Sob}

In this section, we work at the single archimedean place (suppressed).
The facts we need on spherical functions were taken from the
monographs by Helgason \cite{He} and Gangolli and Varadarajan \cite{GaVa}.  
In this \emph{summary}, we follow mostly Helgason, as well as
Garrett \cite{Ga2}, with adaptations for the rank one case.

Let $X = \Gin/\Kin$, $n = \dim_\RR X$, and $\Lap$ be the image of the
Casimir element $\Cas$ on $X$.  A smooth function $f$ on $K \lqu G/K$ is a 
\emph{zonal spherical function} if it is an eigenfunction of $\Lap$ normalized 
by $f(1) = 1$.  For any given eigenvalue $\la$, there is only one such $f$.

Recall that we defined 
\begin{equation*}
  \ph_s(n_a m_\la k)
  = \de_P(m_\la)^s
  = \abs\la^s.
\end{equation*}
By a theorem of Harish-Chandra, all zonal spherical 
functions are of the form
\begin{equation*}
  \psi_s(g) = \int_K \ph_s(kg)\dd k,
\end{equation*}
for some $s\in\CC$.

The \emph{spherical transform} is defined for $f\in L^2(K\lqu G/K)$ by
\begin{equation*}
  \spht f(s)
  = \int_G f \cdot \psi_{1-s}.
\end{equation*}
The inversion formula (up to a constant) is
\begin{equation*}
  f(g)
  = \int_{\Re s=\frac12} \frac{\spht f(s)\cdot\psi_s(g)}{\abs{\HCc(s)}^2}\dd s,
\end{equation*}
with corresponding Plancherel identity
\begin{equation*}
  \Abs{f}^2_{L^2}
  = \int_{\Re s=\frac12} \frac{\abs{\spht f(s)}^2}{\abs{\HCc(s)}^2}\dd s.
\end{equation*}
(We need to use the Plancherel identity to establish an isometric extension.)
The Harish-Chandra function $\HCc(s)$ is given \cite{He} by the 
Gindikin--Karpelevi\v c formula, which, in our case, is
\begin{equation*}
  \HCc(s)
  = \frac{\Gam\Bigl(\bigl(s-\frac12\bigr)\frac{n-1}2\Bigr)\cdot
     \Gam\Bigl(\frac{3(n-1)}4\Bigr)}
    {\Gam\Bigl(\bigl(s+\frac12\bigr)\frac{n-1}2\Bigr)\cdot
     \Gam\Bigl(\frac{n-1}4\Bigr)}.
\end{equation*}
(Helgason's $i\la$ relates to our $s$ by $\rho + i\la = 2\rho s$.
The positive simple root  $\al$ has $\<\al, \al\> = n-1$ and multiplicity 
$(n - 1)$.  Therefore, $2\rho = (n - 1) \al$.)
The main fact we need is that
\begin{equation*}
  \HCc(s) \asymp \abs{s}^{-\frac{n-1}2}.
\end{equation*}

We define the zonal Sobolev norm by
\begin{equation*}
  \Abs{f}^2_\ell
  = \int_{\Re s=\frac12}
     \frac{\abs{\spht f(s)}^2}{\abs{\HCc(s)}^2}
     \, (1 + \abs{\la_s})^\ell \dd s,
\end{equation*}
where $\la_s = \la_{\psi_s} \asymp \abs s^2$.
We define the \emph{zonal spherical Sobolev space} as
\begin{equation*}
  \Hzon\ell
  = \,\text{closure of $\ScD(K\lqu G/K)$ with respect to $\Abs\;_\ell$}.
\end{equation*}

\begin{lem}
If $\de_\infty$ is the delta distribution centered at $1\cdot K$, we have
$\de_\infty\in \Hzon{-n/2-\ell}$, for any $\ep>0$.
\end{lem}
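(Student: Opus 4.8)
The plan is to mimic the proof of Lemma~\ref{l:delta-ade}, replacing the global spectral decomposition by the spherical transform and its Plancherel identity, and replacing Weyl's Law by the Gindikin--Karpelevi\v c asymptotics for $\HCc(s)$ recalled above. First I would observe that, as a distribution on $K\lqu G/K$, the delta $\de_\infty$ at $1\cdot K$ pairs with $f$ by $f(1)$, and under the spherical transform this reads $\spht\de_\infty(s) = \psi_{1-s}(1) = 1$ (since every zonal spherical function is normalized to value $1$ at the identity). So the only thing to control is the weighted integral
\begin{equation*}
  \Abs{\de_\infty}^2_{-n/2-\ep}
  = \int_{\Re s=\frac12}
     \frac{1}{\abs{\HCc(s)}^2}
     \, (1 + \abs{\la_s})^{-n/2-\ep} \dd s.
\end{equation*}

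Next I would substitute the two asymptotic facts in hand: $\abs{\HCc(s)} \asymp \abs s^{-(n-1)/2}$, so $1/\abs{\HCc(s)}^2 \asymp \abs s^{n-1}$, and $\abs{\la_s} \asymp \abs s^2$, so $(1+\abs{\la_s})^{-n/2-\ep} \asymp \abs s^{-n-2\ep}$ for $\abs s$ large. On the contour $\Re s = \frac12$, parametrizing $s = \frac12 + it$ with $t \in \RR$, the integrand is thus $\asymp \abs t^{n-1} \cdot \abs t^{-n-2\ep} = \abs t^{-1-2\ep}$ for large $\abs t$, and is bounded (no pole of $1/\HCc$ on this line, since the Gamma factors in the numerator of $\HCc$ contribute zeros, not poles, to $1/\HCc$) for small $\abs t$. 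Hence the integral converges, giving $\de_\infty \in \Hzon{-n/2-\ep}$, and the spectral expansion $\de_\infty = \int_{\Re s = \frac12} \abs{\HCc(s)}^{-2}\,\psi_s\,\dd s$ holds in that space, exactly parallel to the conclusion of Lemma~\ref{l:delta-ade}.

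The only real subtlety — and the step I expect to be the main obstacle — is the behavior near $\abs s = 0$, i.e.\ near the point $s = \frac12$ on the critical line: there $\HCc(s)$ has a zero coming from $\Gam\bigl((s-\frac12)\frac{n-1}2\bigr)$ having a pole at $s = \frac12$, so $1/\abs{\HCc(s)}^2$ vanishes to order $2$ there rather than blowing up, which only helps convergence; but one should check there are no other small-$s$ poles of $1/\HCc$ hiding in the other Gamma factors, and that the asymptotic $\HCc(s)\asymp\abs s^{-(n-1)/2}$ is being applied only in the regime where it is valid, handling the bounded region separately by continuity. (I note the statement as printed reads ``$\Hzon{-n/2-\ell}$'' and ``for any $\ep>0$'', which appears to be a typo for $\Hzon{-n/2-\ep}$; the proof establishes membership in $\Hzon{-n/2-\ep}$ for every $\ep > 0$, matching Lemma~\ref{l:delta-ade}.) With that caveat, the argument is a routine transcription of the classical Sobolev estimate for the delta distribution, now legitimate because $\Hzon\ell$ is built from the genuinely local spherical Plancherel measure.
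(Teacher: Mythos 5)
Your proposal is correct and follows essentially the same route as the paper's own proof: compute $\spht{\de_\infty}(s)=\psi_{1-s}(1)=1$, then bound the weighted Plancherel integral using $\HCc(s)\asymp\abs{s}^{-(n-1)/2}$ and $\la_s\asymp\abs{s}^2$, which gives convergence exactly when the Sobolev index exceeds $n/2$. Your extra remarks (the vanishing of $1/\abs{\HCc(s)}^2$ at the bottom of the spectrum and the typo $-n/2-\ell$ for $-n/2-\ep$ in the statement) are accurate but do not change the argument.
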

(This is compatible with the outcome for $\de_\ade$, in 
lemma \thref{l:delta-ade}.)

\begin{proof}
We have
\begin{equation*}
  \spht{\de_\infty}(s)
  = \psi_{1-s}(1)
  = \int_K\ph_{1-s}(k)\dd k
  = 1.
\end{equation*}
On the other hand, 
\begin{equation*}
  \frac{(1+\abs{\la_s})^{-\ell}}{\abs{\HCc(s)}^2}
  \asymp \frac{\abs{s}^{-2\ell}}{\abs{s}^{-(n-1)}}
\end{equation*}
and the requirement for
\begin{equation*}
  \int_{\Re s=\frac12}
  \frac{(1+\abs{\la_s})^{-\ell}}{\abs{\HCc(s)}^2}\dd s < \infty
\end{equation*}
is $2\ell - (n-1) > 1$, or $\ell > n / 2$.
\end{proof}

The spherical expansion of $\de_\infty$, valid in $\Hzon{-n/2-\ep}$, is
\begin{equation*}
  \de_\infty
  = \int_{\Re s=\frac12}\frac{\psi_s}{\abs{\HCc(s)}^2}\dd s.
\end{equation*}

Exactly as in proposition \thref{p:auto-Sob:sol}, 
$(\Lap-\la) : \Hzon\ell \to \Hzon{\ell-2}$ is an isomorphism
provided $\la$ is away from all eigenvalues of $\Lap$ (with $s$ lying
on $\Re s=\frac12$ this is not at all an issue).
In that case, also as before, there is a solution $u_\infty$ of 
$(\Lap-\la)^N u_\infty=\de_\infty$. 
In $\Hzon{2N-n/2-\ep}$,
\begin{equation*}
  u_\infty
  = \int_{\Re s=\frac12}\frac{\psi_s}{(\la_s-\la)^N\cdot\abs{\HCc(s)}^2}\dd s.
\end{equation*}

\section{Poincar\'e series and spectral identities}\label{s:Pe}

We return to the global picture and follow Diaconu and Garrett 
\cites{DiGa1, DiGa2, Ga2, Ga25}.

At non-archimedean $v$, let $u_v$ be the characteristic function of 
$\Hv\cdot\Kv$ and $u=u_\infty\otimes\bigotimes_{v<\infty}u_v$.
Noting that $u_\infty$ inherits the left $\Hin$--invariance of 
$\de_\infty$, we define the \emph{Poincar\'e series}
\begin{equation*}
  \Pe(g)
  = \sum_{\ga\in\Hk\lqu\Gk} u(\ga g).
\end{equation*}
(This is a function on $\Gq$, while $u_\infty$ is a function on $\Gin$.)

For brevity, write $P(\Cas)=(\Cas-\la)^N$.  Require $P(\la_\Phi)\gg0$.
Note that $\Pe$ is left $\Gk$--invariant and that $\Cas$ acts only on the
archimedean information. 
It is clear that the Poincar\'e series is a solution of
\begin{equation*}
  P(\Cas) \, \Pe = \de_\ade.
\end{equation*}
This same solution was shown in section~\ref{s:auto-Sob} to be unique
in the automorphic Sobolev space. Therefore, it must be that
\begin{equation*}
  \sum_{\ga\in\Hk\lqu\Gk}u(\ga g)
  = \Pe
  = \sint \frac{\Phi(1)}{P(\la_\Phi)}\cdot \Phi\dd\Phi.
\end{equation*}
(Unremarkably, $\Pe$ has a larger support than $\de_\ade$. The same phenomenon 
occurs already with fundamental solutions of $\Lap$ in $\RR^n$, whose support 
is all of $\RR^n$, while the support of $\de$ is only $\{0\}$.)

On the other hand, 
let $f:\Gq\to\CC$ be an eigenfunction of $\Cas$ with eigenvalue 
$\la_f\ne\la$.  Then
\begin{equation*}\begin{split}
  \<f,\Pe\>_G
  &= \int_{\Gq}f\cdot\cj\Pe
  = \int_{\Ha\lqu\Ga} \int_{\Hq}f\cdot\cj u\\
  &= \int_{\Ha\lqu\Ga}
      \Bigl(\int_{\Hq}\frac{P(\Cas)}{P(\la_f)}\,f\Bigr)\cdot\cj u
  = \int_{\Ha\lqu\Ga}
      \int_{\Hq}f\cdot\frac{P(\Cas)}{P(\la_f)}\,\cj u\\
  &= \frac1{P(\la_f)} \int_{\Ha\lqu\Ga} \int_{\Hq}f\cdot
      \Bigl(\de_\infty\otimes\bigotimes_{v<\infty}u_v\Bigr)
  = \frac1{P(\la_f)} \int_{\Hq} f\\
  &= \frac{(f)_H}{P(\la_f)}.
\end{split}\end{equation*}
That is, the Poincar\'e series can be used to extract information
about periods.

In $\Hauto{2N-n/2-\ep}$, we decompose
\begin{equation*}
  \Pe
  = \sint \<\Pe, \Phi\> \cdot \Phi \dd\Phi
  = \sint \frac{(\cj\Phi)_H}{P(\la_{\cj\Phi})} \cdot \Phi \dd\Phi
  = \sint \frac{(\Phi)_H}{P(\la_\Phi)} \cdot \cj\Phi \dd\Phi.
\end{equation*}
Diaconu and Garrett \cite{DiGa1} discuss a similar decomposition 
for $G=\GL_2$.

\subsection*{Application to spectral identities}

Still following Diaconu and Garrett \cite{DiGa1}, consider two chains of 
inclusions: $H^\Delta\subseteq G^\Delta\subseteq G\times G$
and $H^\Delta\subseteq H\times H\subseteq G\times G$,
where $G^\Delta$ denotes the image of $G\to G\times G:g\mapsto(g,g)$, and 
similarly for $H^\Delta$.

Let $f\otimes f'$ be an automorphic function on $G\times G$. 
The two inclusions suggest two different evaluations of
\begin{equation*}
  \<f \cdot f', \Pe\>_G
  = \int_{\Hk\lqu\Ga} f \cdot f'\cdot u;
\end{equation*}
a spectral decomposition along $G^\Delta$ (we will call it the \emph{spectral
expansion}) or along $H\times H$ (we will call it the \emph{moment expansion}).

Decomposing along $G$ (the spectral expansion), we have
\begin{equation}\label{e:Pe:spec}
  \<f\cdot f', \Pe\>_G
  = \sint_{\text{$\Phi$ on $G$}} \<f\cdot f', \Phi\> \, \<\Phi, \Pe\>
  = \sint \frac{(\Phi)_H}{P(\la_\Phi)} \;
    \int_{\Gq} f \cdot f' \cdot \cj\Phi \dd\Phi,
\end{equation}
involving triple products as well as the periods $(\Phi)_H$ of each
component $\Phi$.

Note that $f$ has a discrete decomposition along 
$H$.  Writing $(g \cdot f)(h)=f(hg)$:
\begin{equation*}
  g\cdot f
  = \sum_F (g\cdot f,F)_H\cdot F,
\end{equation*}
with $F$ running over an orthonormal basis of eigenfunctions of
$\scr Z(\frk h_\infty)$.  We obtain the moment expansion:
\begin{equation}\label{e:Pe:mom}\begin{split}
  \<f \cdot f', \Pe\>_G
  &= \int_{\Hk\lqu\Ga} f \cdot f' \cdot u
  = \int_{\Ha\lqu\Ga} \int_{\Hq} f(hg) \, f'(hg) \, u(g) \dd h \dd g\\
  &= \int_{\Ha\lqu\Ga} (g \cdot f, \smash{\cj{g \cdot f'}})_H \; u(g) \dd g\\
  &= \int_{\Ha\lqu\Ga}\sum_F (g \cdot f, F)_H \, (g \cdot f',\cj F)_H
      \; u(g)\dd g.
\end{split}\end{equation}
Often, it can be rewritten in the form
\begin{equation*}
  \sum_F (f, F)_H \, (f', \cj F)_H \cdot
    \text{weight}(f_\infty, f'_\infty, F_\infty),
\end{equation*}
with the weight depending only on the archimedean parameters.  In the
next section, we show the details of such a rewriting when $f$ and $f'$ are
spherical Eisenstein series, unramified at non-archimedean places.
For applications, one would need to study its asymptotics.

In sum, we establish:

\begin{thm}
Let $f \otimes f'$ be an automorphic function on $G \times G$, where $f$ and
$f'$ are spherical Eisenstein series, unramified at non-archimedean places.  Then 
$\<f \cdot f', \Pe\>_G$ has two expansions: the spectral expansion
\begin{equation*}
  \sint_\Phi\frac{(\Phi)_H}{P(\la_\Phi)} \,
     \int_{\Gq} f \cdot f' \cdot \cj\Phi \dd\Phi
\end{equation*}
is a decomposition along $G$, while the moment expansion
\begin{multline*}
  \sum_F \int_{\Ha\lqu\Ga}
     (g \cdot f, F)_H \, (g \cdot f', \cj F)_H \; u(g) \dd g\\
  = \sum_F (f, F)_H \, (f', \cj F)_H \cdot
    \textup{weight}(f_\infty, f'_\infty, F_\infty)
\end{multline*}
is a decomposition along $H$.
\end{thm}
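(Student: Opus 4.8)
The strategy is to evaluate the single quantity $\<f \cdot f', \Pe\>_G$ in two ways; once both evaluations are justified, the asserted identity is automatic, so the content lies entirely in the justifications and in the final rewriting of the moment side. First one must say what $\<f \cdot f', \Pe\>_G$ means: since $f$ and $f'$ are Eisenstein series, $f \cdot f'$ has moderate growth but is not in $L^2$, so the pairing with $\Pe \in \Hauto{2N-n/2-\ep}$ is not literally the convergent integral $\int_{\Hk\lqu\Ga} f \cdot f' \cdot u$; it is obtained by the regularization carried out in appendix~\ref{s:regularize}, which I would invoke throughout. With that in hand, the Cauchy--Schwarz--Bunyakowsky estimate $\<f, \ph\> \ll \Abs{f}_{-\ell} \, \Abs{\ph}_\ell$ recorded above licenses interchanging the spectral integral appearing in the decomposition of $\Pe$ with the (regularized) pairing.

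For the \emph{spectral expansion} I would start from the decomposition of $\Pe$ in $\Hauto{2N-n/2-\ep}$ established earlier in this section (valid under the standing requirement $P(\la_\Phi) \gg 0$) and pair it against $f \cdot f'$; moving the pairing inside the spectral integral, and using the conjugation-symmetry of the spectral basis exactly as in that derivation, yields the expansion $\sint \frac{(\Phi)_H}{P(\la_\Phi)} \int_{\Gq} f \cdot f' \cdot \cj\Phi \dd\Phi$ of \eqref{e:Pe:spec}. The delicate point is that the triple products $\int_{\Gq} f \cdot f' \cdot \cj\Phi$ must be read through the same regularization, uniformly enough in $\Phi$ to justify the interchange; the Sobolev bound on $\de_\ade$ --- hence on $\Pe$ --- from lemma~\thref{l:delta-ade} supplies what is needed.

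For the \emph{moment expansion} I would instead unwind the Poincar\'e series over $\Hk\lqu\Gk$, turning $\<f \cdot f', \Pe\>_G$ into $\int_{\Ha\lqu\Ga} \int_{\Hq} f(hg) \, f'(hg) \, u(g) \dd h \dd g$, recognize the inner integral over $\Hq$ as $(g \cdot f, \cj{g \cdot f'})_H$, and expand $g \cdot f$ in the discrete basis $\{F\}$ of $\scr Z(\frk h_\infty)$-eigenfunctions --- discreteness being available because $\Hq$ is compact, $H$ being $k$-anisotropic --- to arrive at $\sum_F \int_{\Ha\lqu\Ga} (g \cdot f, F)_H \, (g \cdot f', \cj F)_H \, u(g) \dd g$, which is \eqref{e:Pe:mom}. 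One must still justify pulling $\sum_F$ out past the $g$-integral against the (growing) fundamental solution $u_\infty$, again using the moderate growth of $f$ and $f'$ together with the Sobolev estimates.

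The last step --- rewriting this as $\sum_F (f, F)_H \, (f', \cj F)_H \cdot \textup{weight}(f_\infty, f'_\infty, F_\infty)$ --- is where the hypotheses that $f$ and $f'$ are spherical Eisenstein series, unramified at every non-archimedean place, enter, and it is the main obstacle. The idea is to factor the integral over $\Ha\lqu\Ga$ through the places: at each non-archimedean $v$ the factor $u_v = \ch_{\Hv\Kv}$ together with the $\Kv$-invariance of $f$, $f'$, and $F$ collapses the local integral, so that the product over the finite places reassembles the global periods $(f, F)_H$ and $(f', \cj F)_H$, while the remaining integral over $\Hin\lqu\Gin$ against $u_\infty$ depends only on the archimedean data and is what we call the weight. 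Making this factorization precise --- in particular identifying $(g \cdot f, F)_H$, as a function of $g$, with the relevant global Shintani function so that the collapse at the finite places is legitimate --- and then evaluating the archimedean weight is the substance of section~\ref{s:Eis}, which I would invoke to conclude.
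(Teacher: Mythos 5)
Your proposal is correct and follows essentially the same route as the paper: the spectral expansion comes from pairing against the Sobolev-space decomposition of the Poincar\'e series, the moment expansion from unwinding over $\Hk\lqu\Gk$ and using the discrete decomposition along the compact quotient $\Hq$, with the regularization of $E_a E_b$ (appendix) and the place-by-place factorization producing the archimedean weight carried out exactly as in section~\ref{s:Eis}. Your added remarks on justifying the interchanges via the Sobolev bounds are consistent with, if slightly more explicit than, the paper's own treatment.
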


\section{Eisenstein series and their second moments}\label{s:Eis}

We want to specialize to spherical, unramified, Eisenstein series 
$f = E_a$ and $f' = E_b$.  Here, $a, b \in \CC$, and $E_a$ and $E_b$ are
parametrized as discussed in the introduction.

One first obstacle is that 
$f \cdot f'$ is not in $L^2(\Gq)$ and it is unclear whether we can integrate
\begin{equation*}
  \<f \cdot f', \Pe\>_G
  = \int_{\Gq} f \cdot f' \cdot \Pe
\end{equation*}
directly.  It is possible to subtract finitely many singular terms from
$f \cdot f'$ so that the difference is square-integrable; we discuss that in 
appendix \ref{s:regularize}.

The exact choice of singular terms will depend on where $a$ or $b$ 
lie.  For definiteness, say
\begin{equation*}
  \cal F
  = E_a \, E_b + \sum_s c_s \, E_s.
\end{equation*}
(with finitely many $s$ occurring) is the regularized expression.

For the spectral expansion, we have, as in \eqref{e:Pe:spec},
\begin{equation*}
  \<\cal F, \Pe\>_G
  = \sint_{\text{$\Phi$ on $G$}} \<\cal F, \Phi\> \, \<\Phi, \Pe\>
  = \sint \<\cal F, \Phi\> \, \frac{(\Phi)_H}{P(\la_\Phi)} \dd\Phi.
\end{equation*}

The moment expansion starts as \eqref{e:Pe:mom},
\begin{equation*}
  \<\cal F,\Pe\>_G
  = \int_{\Gq}\cal F\cdot \Pe
  = \int_{\Hk\lqu\Ga}\cal F\cdot u
  = \int_{\Ha\lqu\Ga}\int_{\Hq}\cal F(hg)\dd h\cdot\,u(g)\dd g,
\end{equation*}
where the convergence of the inner integral is justified by
the compactness of $\Hq$.
Recall that at non-archimedean $v$ we chose $u_v=\ch_{\Hv\cdot\Kv}$,
so we assume $g_v\in\Hv\cdot\Kv$.  Therefore, we can simplify
further:
\begin{equation}\label{e:Eis:mom-unw}
  \<\cal F, \Pe\>_G
  = \int_{\Hin\lqu\Gin} \int_{\Hq} \cal F(hg) \dd h \; u_\infty(g) \dd g.
\end{equation}
The inner integral is
\begin{equation}\label{e:Eis:inner}
  \int_{\Hq} \cal F(hg)\dd h
  = \int_{\Hq} E_a(hg) \, E_b(hg) \dd h
    + \sum_s c_s \, \int_{\Hq}E_s(hg) \dd h.
\end{equation}

\subsection*{The ``main'' part}

For the $E_a \, E_b$ summand, we have, as before,
\begin{equation}\label{e:Eis:main-dbl}
  \int_{\Hq} E_a(hg) \, E_b(hg) \dd h
  = \sum_F (g \cdot E_a, F)_H \, (g \cdot E_b, \cj F)_H.
\end{equation}

We remark that
\begin{equation*}\begin{split}
  (g \cdot E_s, F)_H
  &= \int_{\Hq} E_s(hg) \, \cj F(h) \dd h
  = \int_{\Sk\lqu\Ha} \ph_s(hg) \, \cj F(h) \dd h\\
  &= \int_{\Sa\lqu\Ha} \ph_s(hg) \, \int_{\Sq} \cj F(\theta h) \dd\theta \dd h.
\end{split}\end{equation*}
The function
\begin{equation*}
  F_\Th(h)
  = \int_{\Sq} F(\theta h) \dd\theta
\end{equation*}
is a spherical vector in $\Ind_\Th^H1$, normalized by $F_\Th(1) = (F)_\Th$.
Therefore, with $\eta$ a spherical vector normalized by $\eta(1) = 1$, 
we obtain
\begin{equation*}
  (g \cdot E_s, F)_H
  = \int_{\Sa\lqu\Ha} \ph_s(hg) \, \cj F_\Th(h) \dd h.
  = (\cj F)_\Th \cdot \int_{\Sa\lqu\Ha} \ph_s(hg) \, \cj\eta(h) \dd h.
\end{equation*}

Recalling that $g_v\in\Hv\cdot\Kv$ for non-archimedean $v$,
we see that all but the archimedean factor are independent of $g$ and
\begin{equation*}
  (g\cdot E_s,F)_H
  = \Bigl(\int\limits_{\Sin\lqu\Hin}
          \ph_{s,\infty} (hg_\infty) \, \cj F_\Th(h) \dd h
    \Bigr) \cdot \prod_{v<\infty}
    \Bigl(\int\limits_{\Sv\lqu\Hv} \ph_{s,v}(h) \, \cj F_\Th(h) \dd h\Bigr).
\end{equation*}
We abbreviate this as follows:
\begin{align*}
  \psi_{s, F}(g_\infty)
  &= \int_{\Sin\lqu\Hin} \ph_{s,\infty}(hg_\infty) \, \cj F_\Th(h) \dd h;\\
  (E_s,F)'_H
  &= \prod_{v<\infty}\int_{\Sv\lqu\Hv} \ph_{s,v}(h) \, \cj F_\Th(h) \dd h;\\
  (g \cdot E_s, F)_H
  &= \psi_{s,F}(g_\infty) \, (E_s,F)'_H.
\end{align*}

Combining this with \eqref{e:Eis:main-dbl}, we see that
the ``main'' part of the moment expansion \eqref{e:Eis:mom-unw} is
\begin{equation}\label{e:Eis:main-trpl}
  \sum_F (E_a,F)'_H \, (E_b,\cj F)'_H \, \int_{\Hin\lqu\Gin}
    \psi_{a,F}(g) \, \psi_{b,\cj F}(g) \, u_\infty(g) \dd g.
\end{equation}
Suppress for a moment the
$\infty$ indices, and use $G=HP$, with measure $\dd{(hp)}=\dd h\dd p$ and
$\dd p$ being a \emph{right} Haar measure.  We have
\begin{equation*}
  \int_{H\lqu G} \psi_{a,F} \cdot \psi_{b,\cj F} \cdot u
  = \int_H\int_H\int_P \ph_a(hp) \, \cj F_\Th(h) \cdot
    \ph_b(h'p) \, F_\Th(h') \cdot u(p) \dd p \dd h \dd{h'}.
\end{equation*}
With a nod to Diaconu and Garrett \cite{DiGa1}, set
\begin{equation*}
  X_{a,b}(h,h')
  = \int_P \ph_a(hp) \, \ph_b(h'p) \, u_\infty(p)\dd p
\end{equation*}
and conclude
\begin{equation*}
  \int_{H\lqu G}\psi_{a,F} \cdot \psi_{b,\cj F} \cdot u
  = \int_{\Th\lqu H}\int_{\Th\lqu H}
    \cj F_\Th(h) \, F_\Th(h') \, X_{a,b}(h,h') \dd h \dd{h'}.
\end{equation*}

Resuming \eqref{e:Eis:main-trpl}, we see that the ``main'' part
of the moment expansion is
\begin{equation}\label{e:Eis:main}
  \sum_F (E_a,F)'_H \, (E_b,\cj F)'_H \,
    \int_{\Sin\lqu\Hin}\int_{\Sin\lqu\Hin}
    \cj F_\Th(h) \, F_\Th(h') \, X_{a,b}(h,h') \dd h \dd{h'}.
\end{equation}
Recalling that $F_\Th(h) = (F)_\Th \, \eta_F(h)$, where $\eta_F$ is a spherical
vector in $\Ind_\Th^H 1$ normalized by $\eta_F(1) = 1$, we can make the periods
even more apparent.

\subsection*{The ``singular'' part}

For the other summands in \eqref{e:Eis:inner}, we observe that, by 
Witt's lemma, $\Pk\lqu\Gk$ is the 
space of isotropic lines in $k^{n+1}$, on which $\Hk$ acts transitively.  As
$\Th = H \cap P$, we have $\Pk\lqu\Gk = \Sk\lqu\Hk$ and
\begin{equation*}\begin{split}
  \int_{\Hq} E_s(hg) \dd h
  &= \int_{\Hq} \sum_{\ga \in \Sk\lqu\Hk} \ph_s(\ga hg) \dd h
  = \int_{\Sk\lqu\Ha} \ph_s(hg) \dd h\\
  &= \vol(\Sa) \int_{\Sa\lqu\Ha} \ph_s(hg) \dd h.
\end{split}\end{equation*}
Normalizing $\vol(\Sa) = 1$ and recalling that $g_v \in \Hv\cdot\Kv$ for 
non-archimedean $v$, we obtain
\begin{equation*}\begin{split}
  \int_{\Hq} E_s(hg) \dd h
  &= \Bigl(\int_{\Sin\lqu\Hin} \ph_{s,\infty}(hg_\infty) \dd h\Bigr)
     \cdot\prod_{v<\infty}
     \Bigl(\int_{\Sv\lqu\Hv} \ph_{s,v}(h) \dd h\Bigr)\\
  &= \psi_s(g) \, (E_s)_H.
\end{split}\end{equation*}
Additionally, because $u_\infty$ is a solution of 
$P(\Lap)u_\infty=(\Lap-\la)^N u_\infty=\de_\infty$, we have
\begin{equation*}\begin{split}
  \int_{\Hin\lqu\Gin}\psi_s\cdot u_\infty
  &=\int_{\Hin\lqu\Gin}\frac{P(\Lap)\psi_s}{P(\la_s)}\cdot u_\infty\\
  &=\int_{\Hin\lqu\Gin}\psi_s\cdot\frac{P(\Lap)u_\infty}{P(\la_s)}
  = \int_{\Hin\lqu\Gin}\psi_s\cdot\frac{\de_\infty}{P(\la_s)}
  = \frac{1}{P(\la_s)}.
\end{split}\end{equation*}

Therefore, the ``singular'' part of the moment 
expansion \eqref{e:Eis:mom-unw} becomes 
\begin{equation}\label{e:Eis:sing}
  \int_{\Hin\lqu\Gin}\sum_s c_s \, (g \cdot E_s, 1)_H
  = \sum_s c_s \, (E_s)_H \,
    \int_{\Hin\lqu\Gin}\psi_s \cdot u_\infty
  = \sum_s c_s \, \frac{(E_s)_H}{P(\la_s)}.
\end{equation}

Combining this with the ``main'' part \eqref{e:Eis:main}, we obtain
the complete moment expansion:

\begin{prop}
Let
\begin{equation*}
  \cal F
  = E_a \, E_b + \sum_s c_s \, E_s
\end{equation*}
be the regularized expression (with finitely many $s$ occurring) for 
$E_a \, E_b$, and
\begin{gather*}
  X_{a,b}(h, h')
  = \int_{\Pin}
     \ph_{a,\infty}(hp) \, \ph_{b,\infty}(h'p) \, u_\infty(p) \dd p,\\
  \textup{weight}_{a,b,F}
  = \abs{(F)_\Th}^2 \int_{\Hin}\int_{\Hin}
    \cj \eta_F(h) \, \eta_F(h') \, X_{a,b}(h,h')\dd h\dd{h'},
\end{gather*}
where $\eta_F$ is a spherical vector in $\Ind_\Th^H 1$ normalized by 
$\eta_F(1) = 1$.
Then the moment expansion of $\<\cal F, \Pe\>_G$ is
\begin{equation*}
  \<\cal F, \Pe\>_G
  = \sum_F (E_a,F)'_H \, (E_b,\cj F)'_H \cdot \textup{weight}_{a,b,F}
    + \sum_s c_s \, \frac{(E_s)_H}{P(\la_s)}.
\end{equation*}
\end{prop}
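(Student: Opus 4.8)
The statement is an assembly of the two computations carried out just above, so the plan is to make that assembly precise and to flag the analytic points that legitimize it. I would begin from the unwound moment expansion \eqref{e:Eis:mom-unw},
\[
  \<\cal F, \Pe\>_G
  = \int_{\Hin\lqu\Gin}\int_{\Hq}\cal F(hg)\dd h\;u_\infty(g)\dd g,
\]
noting first that this is meaningful: by the regularization of appendix \ref{s:regularize} the function $\cal F$ is square-integrable on $\Gq$, so $\<\cal F,\Pe\>_G$ is an honest $L^2$ pairing once $N$ is large enough that $\Pe\in\Hauto{2N-n/2-\ep}\subseteq L^2$, and the inner integral over the compact $\Hq$ converges absolutely. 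I then split the inner integral as in \eqref{e:Eis:inner} into a ``main'' piece $\int_{\Hq}E_a(hg)\,E_b(hg)\dd h$ and a ``singular'' piece $\sum_s c_s\int_{\Hq}E_s(hg)\dd h$, and treat the two separately before recombining.

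For the main piece I would reproduce the chain \eqref{e:Eis:main-dbl}--\eqref{e:Eis:main}. The discrete $H$-decomposition $g\cdot E_a=\sum_F (g\cdot E_a,F)_H\,F$ (and likewise for $E_b$) gives $\int_{\Hq}E_a(hg)E_b(hg)\dd h=\sum_F(g\cdot E_a,F)_H\,(g\cdot E_b,\cj F)_H$. Each period $(g\cdot E_s,F)_H$ is then unwound using left $\Na$-invariance of $\ph_s$ and the identification $\Pk\lqu\Gk=\Sk\lqu\Hk$ coming from Witt's lemma, and rewritten through $F_\Th(h)=\int_{\Sq}F(\theta h)\dd\theta$, a spherical vector of $\Ind_\Th^H 1$; factoring off the non-archimedean places (where $g_v\in\Hv\cdot\Kv$) isolates $(g\cdot E_s,F)_H=\psi_{s,F}(g_\infty)\,(E_s,F)'_H$. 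Plugging this into the archimedean integral and using $\Gin=\Hin\Pin$ with $\dd{(hp)}=\dd h\dd p$, $\dd p$ right-invariant, turns $\int_{\Hin\lqu\Gin}\psi_{a,F}\,\psi_{b,\cj F}\,u_\infty$ into the double integral over $(\Sin\lqu\Hin)^2$ of $\cj F_\Th(h)\,F_\Th(h')\,X_{a,b}(h,h')$. Finally the substitution $F_\Th(h)=(F)_\Th\,\eta_F(h)$, with $\eta_F$ the unique normalized spherical vector, pulls $\abs{(F)_\Th}^2$ out front and produces exactly $\textup{weight}_{a,b,F}$, so the main piece equals $\sum_F(E_a,F)'_H\,(E_b,\cj F)'_H\cdot\textup{weight}_{a,b,F}$. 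For the singular piece I would reproduce \eqref{e:Eis:sing}: the same Witt's-lemma identification gives $\int_{\Hq}E_s(hg)\dd h=\vol(\Sa)\int_{\Sa\lqu\Ha}\ph_s(hg)\dd h=\psi_s(g)\,(E_s)_H$ after normalizing $\vol(\Sa)=1$, and since $u_\infty$ solves $P(\Lap)u_\infty=\de_\infty$ by section \ref{s:zonal-Sob} while $\psi_s$ is a $\Lap$-eigenfunction with eigenvalue $\la_s$, self-adjointness of $P(\Lap)$ moves it onto $u_\infty$ and yields $\int_{\Hin\lqu\Gin}\psi_s\,u_\infty=P(\la_s)^{-1}$; hence this piece equals $\sum_s c_s\,(E_s)_H/P(\la_s)$, and adding the two pieces is the asserted identity.

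I expect the genuine obstacle to be not any single display but the interchanges of summation and integration, above all pulling $\sum_F$ out of $\int_{\Hin\lqu\Gin}(\cdot)\,u_\infty(g)\dd g$ in the main piece. This needs the discrete $H$-spectral expansion of $g\cdot E_a$ to converge strongly enough: since $g\cdot E_a$ is a smooth vector in a unitary $\Ha$-representation, the coefficients $(g\cdot E_a,F)_H$ decay rapidly in the $\scr Z(\frk h_\infty)$-eigenvalue of $F$, uniformly for $g_\infty$ in compact sets, and combined with enough decay of $u_\infty$ (which is why $N$ must be taken large) this secures absolute convergence and Fubini. Two smaller points deserve a sentence each: that $F_\Th$ really is a spherical vector of $\Ind_\Th^H 1$, well defined because $k$-anisotropy of $\Th$ makes $\Sq$ compact, and normalized by $F_\Th(1)=(F)_\Th$, so that one-dimensionality of the spherical line licenses $F_\Th=(F)_\Th\,\eta_F$; and that the infinite product defining $(E_a,F)'_H$ converges, since at almost every $v$ it is the local zeta integral of an unramified datum, a ratio of local $L$-factors, so one first works in a range of $a$ and $b$ where the product converges and then continues meromorphically.
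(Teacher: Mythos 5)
Your proposal is correct and follows essentially the same route as the paper, which proves this proposition precisely by the chain \eqref{e:Eis:mom-unw}--\eqref{e:Eis:inner}, \eqref{e:Eis:main-dbl}--\eqref{e:Eis:main} for the ``main'' part and \eqref{e:Eis:sing} for the ``singular'' part, then adding the two. Your additional remarks on the interchange of $\sum_F$ with the integral against $u_\infty$, the one-dimensionality of the spherical line giving $F_\Th=(F)_\Th\,\eta_F$, and the convergence of the non-archimedean product are sensible justifications of points the paper leaves implicit, not deviations from its argument.
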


The actual computation of $X_{a,b}$ and $\textup{weight}_{a,b,F}$ can get quite
involved, as illustrated, for example, in the $GL(r) \times \GL(r-1)$ case
discussed by Diaconu, Garrett and Goldfeld \cite{DiGaGo}.

\appendix
\section{Regularizing functions not of rapid decay}\label{s:regularize}

In the previous section, we needed the spectral expansion of $E_a E_b$, and
observed that one difficulty was that that product is not in $L^2(\Gq/K)$.
However, it is possible to subtract a linear combination of Eisenstein series
(the singular part), so that the difference is an $L^2$ function.

The idea, which I learned from Garrett \cites{Ga2, Ga25} and he traces to 
Zagier \cite{Za}, uses the constant terms of the Eisenstein series to guide 
the choice of singular terms, so as to assure cancellation of non-$L^2$ terms.  
We articulate the details in our specific case $G=O(n,1)$.

We saw in the introduction that $M = \Th A$, where $A = \{m_\la\} \cong \GL(1)$.
We will always write the elements of $P = N\Th A$ in the form 
$p = n\theta m_\la$.  Because $\dd n\dd\theta\dd{(m_\la)}$ is a right 
invariant measure, $\dd p = \de_P(m_\la)^{-1} \cdot \dd n\dd\theta\dd{(m_\la)}$ 
is a \emph{left} invariant measure on $P$.  In the same manner, we always write
the elements of $G = PK = N\Th AK$ in the form $g = pk = n\theta m_\la k$, in 
which case 
$\dd g = \dd p \dd k = \de_P(m_\la)^{-1}\cdot\dd n\dd\theta\dd{(m_\la)}\dd k$ 
is a Haar measure on $G$.

Recall now that we can choose a compact $C \subset \Na\Sa$ and a real 
$t_0 > 0$ such that the Siegel set
\begin{equation*}
  \frk S
  = \{g = n\theta m_\la k : n\theta \in C \text{ and } \de_P(m_\la) \ge t_0\}
\end{equation*}
satisfies $\Gk\frk S = \Ga$. We assume such a choice was made.

Supposing
\begin{equation*}
  f(g) \ll \de_P(m_\la)^\sigma
\end{equation*}
for some real $\sigma$, we have
\begin{equation*}\begin{split}
  \int_{\Gq} f
  &\le \int_{\frk S} f
  \ll \int_K \int_{t_0}^\infty \int_C 
      \abs{f(n\theta m_\la k)}
      \cdot \de_P(m_\la)^{-1} \cdot \dd{(n\theta)}\frac{\dd\la}\la \dd k\\
  &\ll \int_{t_0}^\infty \de_P(m_\la)^{\sigma - 1} \frac{\dd\la}\la
  = \int_{t_0}^\infty \abs\la^{n(\sigma - 1) - 1} \dd\la 
\end{split}\end{equation*}
(in the last step, we used $\de_P(m_\la) = \abs\la^n$). This last integral 
converges when $\sigma < 1$. We have thus shown that $f$ is integrable over 
$\Gq$ provided $\sigma < 1$. For $L^2$ integrability, we need 
$\sigma < \frac12$.

Recall next that a function $f$ on $\Pk\lqu\Ga$ is of moderate growth if
\begin{equation*}
  f(g)
  \ll \de_P(m_\la)^\sigma
  \qquad\text{for some $\sigma > 0$}
\end{equation*}
and of rapid decay if 
\begin{equation*}
  f(g)
  \ll \de_P(m_\la)^\sigma
  \qquad\text{for all $\sigma < 0$}.
\end{equation*}
From the discussion above, it is apparent that if $f$ is right $\Gk$--invariant 
and of rapid decay, then it is integrable over $\Gq$.

We also know \cites{Lan, MoWa} that, choosing the normalization $\vol(\Nq) = 1$,
the constant term of the Eisenstein series is
\begin{equation*}
  \const E_s(g)
  = \de_P(m_\la)^s + c_s \cdot \de_P(m_\la)^{1-s},
\end{equation*}
where $c_s$ is the same constant as in the functional equation
\begin{equation*}
  E_{1-s} = c_{1-s} \cdot E_s.
\end{equation*}
Moreover, it is a standard fact that $f - \const f$ is of rapid decay, so
we can write
\begin{equation*}
  E_s(g)
  = \de_P(m_\la)^s + c_s \cdot \de_P(m_\la)^{1-s} + \text{fn rapid decay}.
\end{equation*}

We return to the case $E_a\cdot E_b$ with $a, b \in \CC$. Clearly,
\begin{multline*}
  E_a(g) \cdot E_b(g)
  = \de_P(m_\la)^{a + b}
     + c_a \cdot \de_P(m_\la)^{1 - a + b}\\
     + c_b \cdot \de_P(m_\la)^{a + 1 - b}
     + c_a \cdot c_b \cdot \de_P(m_\la)^{1 - a + 1 - b}
     + \text{fn rapid decay}.
\end{multline*}

As we know that exponents less than $\frac12$ assure $L^2$ integrability, we 
usually can say more.

For example, if $\Re a > 1$ and $\Re b = \frac12$,
\begin{equation*}
  E_a(g) \cdot E_b(g)
  = \de_P(m_\la)^{a + b} + c_b \cdot \de_P(m_\la)^{a + 1 - b}
     + \text{$L^2$ function}.
\end{equation*}
Moreover,
\begin{equation*}\begin{split}
  E_{a+b}(g)
  &= \de_P(m_\la)^{a+b} + c_{a+b} \cdot \de_P(m_\la)^{1-a-b}
     + \text{fn rapid decay}\\
  &= \de_P(m_\la)^{a+b} + \text{$L^2$ function}.
\end{split}\end{equation*}
In the same manner,
\begin{equation*}\begin{split}
  E_{a+1-b}(g)
  &= \de_P(m_\la)^{a+1-b} + c_{a+1-b} \cdot \de_P(m_\la)^{-a+b}
     + \text{fn rapid decay}\\
  &= \de_P(m_\la)^{a+1-b} + \text{$L^2$ function}.
\end{split}\end{equation*}
Therefore,
\begin{equation*}
  E_a\cdot E_b - E_{a+b} - c_b \cdot E_{a+1-b}
  = \text{$L^2$ function}.
\end{equation*}

We may well have more than two singular terms.  For example, if 
$\Re a = \Re b = \frac12$, we obtain:
\begin{multline*}
  E_a(g) \cdot E_b(g)
  = \de_P(m_\la)^{a + b}
     + c_a \cdot \de_P(m_\la)^{1 - a + b}\\
     + c_b \cdot \de_P(m_\la)^{a + 1 - b}
     + c_a \cdot c_b \cdot \de_P(m_\la)^{2 - a - b}
     + \text{$L^2$ function}.
\end{multline*}
Here all exponents have real part equal to $1$.  But the important point is 
that if one exponent in
\begin{equation*}
  E_s(g)
  = \de_P(m_\la)^s + c_s \cdot \de_P(m_\la)^{1-s} + \text{$L^2$ function}
\end{equation*}
has real part greater than $\frac12$, the other one will have it less than
$\frac12$.  In our case, we have
\begin{align*}
  E_{a+b}(g)
  &= \de_P(m_\la)^{a+b} + \text{$L^2$ function};\\
  E_{1-a+b}(g)
  &= \de_P(m_\la)^{1-a+b} + \text{$L^2$ function};\\
  E_{a+1-b}(g)
  &= \de_P(m_\la)^{a+1-b} + \text{$L^2$ function};\\
  E_{2-a-b}(g)
  &= \de_P(m_\la)^{2-a-b} + \text{$L^2$ function}.
\end{align*}
Therefore,
\begin{equation*}
  E_a\cdot E_b - E_{a+b} - c_a \cdot E_{1-a+b}
    - c_b \cdot E_{a+1-b} - c_a\cdot c_b\cdot E_{2-a-b}
  = \text{$L^2$ function}.
\end{equation*}

\begin{bibdiv}
\begin{biblist}

\bib{Ar}{article}{
  author={Arthur, James},
  title={The {S}elberg trace formula for groups of {$F$}-rank one},
  date={1974},
  journal={Ann. of Math. (2)},
  volume={100},
  pages={326\ndash 385},
  review={\MR{0360470 (50 \#12920)}},
}

\bib{BeRe1}{article}{
  author={Bernstein, Joseph},
  author={Reznikov, Andre},
  title={Analytic continuation of representations and estimates of automorphic forms},
  date={1999},
  journal={Ann. of Math. (2)},
  volume={150},
  number={1},
  pages={329\ndash 352},
  review={\MR{1715328 (2001h:11053)}},
}

\bib{BeRe2}{article}{
  author={Bernstein, Joseph},
  author={Reznikov, Andre},
  title={Sobolev norms of automorphic functionals},
  date={2002},
  journal={Int. Math. Res. Not.},
  number={40},
  pages={2155\ndash 2174 (electronic)},
  review={\MR{1930758 (2003h:11058)}},
}

\bib{BeRe3}{article}{
  author={Bernstein, Joseph},
  author={Reznikov, Andre},
  title={Periods, subconvexity of {$L$}-functions and representation theory},
  date={2005},
  journal={J. Differential Geom.},
  volume={70},
  number={1},
  pages={129\ndash 141},
  review={\MR{2192063 (2007b:11063)}},
}

\bib{Bo1}{thesis}{
  author={Boavida, Jo{\~a}o Pedro Pereira},
  title={Compact periods of {E}isenstein series of orthogonal groups of rank one},
  organization={University of Minnesota},
  date={2009},
}

\bib{Bo2}{article}{
  author={Boavida, Jo{\~a}o Pedro},
  title={Compact periods of {E}isenstein series of orthogonal groups of rank one},
  date={2013},
  journal={Indiana U. Math. J.},
  volume={62},
  number={3},
  pages={869\ndash 890},
  review={\MR{3164848}},
}

\bib{Bo3}{article}{
  author={Boavida, Jo{\~a}o Pedro},
  title={Compact periods of {E}isenstein series of orthogonal groups of rank one at even primes},
  date={2014},
  journal={New York J. Math.},
  volume={20},
  pages={153\ndash 181},
  review={\MR{3177169}},
}

\bib{De}{article}{
  author={DeCelles, Amy},
  title={An exact formula relating lattice points in symmetric spaces to the automorphic spectrum},
  journal={Illinois J. Math.},
   volume={56},
   date={2012},
   number={3},
   pages={805\ndash 823},
   review={\MR{3161352}},
}

\bib{DiGa1}{article}{
  label={DG09a},
  author={Diaconu, Adrian},
  author={Garrett, Paul},
  title={Integral moments of automorphic {$L$}-functions},
  date={2009},
  journal={J. Inst. Math. Jussieu},
  volume={8},
  number={2},
  pages={335\ndash 382},
  review={\MR{2485795 (2010h:11081)}},
}

\bib{DiGa2}{article}{
  label={DG09b},
  author={Diaconu, Adrian},
  author={Garrett, Paul},
  title={Subconvexity bounds for automorphic $L$-functions},
  journal={J. Inst. Math. Jussieu},
  volume={9},
  date={2010},
  number={1},
  pages={95\ndash 124},
  review={\MR{2576799 (2011b:11064)}},
}

\bib{DiGa3}{article}{
  label={DG09c},
  author={Diaconu, Adrian},
  author={Garrett, Paul},
  title={Averages of symmetric square {$L$}-functions, and applications},
  date={2009},
  status={preprint},
  eprint={http://www.math.umn.edu/~garrett/m/v/sym_two.pdf},
}

\bib{DiGaGo}{article}{
  author={Diaconu, Adrian},
  author={Garrett, Paul},
  author={Goldfeld, Dorian},
  title={Moments for {$L$}-functions for {$GL_r\times GL_{r-1}$}},
  conference={
    title={Patterson 60++ {I}nternational {C}onference on the {O}ccasion of the 60th {B}irthday of {S}amuel {J}.~{P}atterson},
    address={University of {G}{\"o}ttingen},
    date={July 27\ndash 29, 2009}, },
  book={
    title={Contributions in analytic and algebraic number theory},
    editor={Blomer, Valentin},
    editor={Mih{\u a}ilescu, Preda},
    series={Springer Proc. Math.},
    volume={9},
    publisher={Springer},
    place={New York}, },
  date={2012},
  pages={197\ndash 227},
  review={\MR{3060461}},
}

\bib{Do}{article}{
  author={Donnelly, Harold},
  title={On the cuspidal spectrum for finite volume symmetric spaces},
  date={1982},
  journal={J. Differential Geom.},
  volume={17},
  number={2},
  pages={239\ndash 253},
  review={\MR{664496 (83m:58079)}},
}

\bib{GaVa}{book}{
  author={Gangolli, Ramesh},
  author={Varadarajan, V.~S.},
  title={Harmonic analysis of spherical functions on real reductive groups},
  series={Ergebnisse der Mathematik und ihrer Grenzgebiete},
  publisher={Springer-Verlag},
  address={Berlin},
  date={1988},
  volume={101},
  review={\MR{954385 (89m:22015)}},
}

\bib{Ga2}{misc}{
  author={Garrett, Paul},
  date={2006\ndash 09},
  note={Talks, private communication, unpublished notes},
}

\bib{Ga25}{article}{
  author={Garrett, Paul},
  title={Integral moments IIIa},
  date={2008},
  status={notes from a talk at ICMS, Edinburgh, August 2008},
  eprint={http://math.umn.edu/~garrett/m/v/edinb_two.pdf},
}

\bib{Ga3}{article}{
  author={Garrett, Paul},
  title={Global automorphic Sobolev spaces},
  date={2011},
  status={preprint},
  eprint={http://www.math.umn.edu/~garrett/m/v/auto_sob.pdf},
}

\bib{God}{article}{
  author={Godement, R.},
  title={The spectral decomposition of cusp-forms},
  pages={225\ndash 234},
  book={
    editor={Borel, Armand},
    editor={Mostow, George~D.},
    title={Algebraic groups and discontinuous subgroups},
    series={Proc. Sympos. Pure Math.},
    publisher={Amer. Math. Soc.},
    address={Providence, R.I.},
    volume={9},
  },
  date={1966},
  conference={
    title={Symposium on Algebraic Groups},
    address={University of Colorado, Boulder, Colo.},
    date={July 5\ndash August 6, 1965},
  },
}

\bib{GrPr1}{article}{
  author={Gross, Benedict~H.},
  author={Prasad, Dipendra},
  title={Test vectors for linear forms},
  date={1991},
  journal={Math. Ann.},
  volume={291},
  number={2},
  pages={343\ndash 355},
  review={\MR{1129372 (92k:22028)}},
}

\bib{GrPr2}{article}{
  author={Gross, Benedict~H.},
  author={Prasad, Dipendra},
  title={On the decomposition of a representation of {${\rm SO}\sb n$} when restricted to {${\rm SO}\sb {n-1}$}},
  date={1992},
  journal={Canad. J. Math.},
  volume={44},
  number={5},
  pages={974\ndash 1002},
  review={\MR{1186476 (93j:22031)}},
}

\bib{GrPr3}{article}{
  author={Gross, Benedict~H.},
  author={Prasad, Dipendra},
  title={On irreducible representations of {${\rm SO}\sb {2n+1} \times {\rm SO}\sb {2m}$}},
  date={1994},
  journal={Canad. J. Math.},
  volume={46},
  number={5},
  pages={930\ndash 950},
  review={\MR{1295124 (96c:22028)}},
}

\bib{GrRe}{article}{
  author={Gross, Benedict~H.},
  author={Reeder, Mark},
  title={From {L}aplace to {L}anglands via representations of orthogonal groups},
  date={2006},
  journal={Bull. Amer. Math. Soc. (N.S.)},
  volume={43},
  number={2},
  pages={163\ndash 205},
  review={\MR{2216109 (2007a:11159)}},
}

\bib{IcIk}{article}{
  author={Ichino, Atsushi},
  author={Ikeda, Tamotsu},
  title={On the periods of automorphic forms on special orthogonal groups and the {G}ross--{P}rasad conjecture},
  date={2010},
  journal={Geom. Funct. Anal.},
  volume={19},
  number={5},
  pages={1378\ndash1425},
  review={\MR{2585578}},
}

\bib{He}{book}{
  author={Helgason, Sigurdur},
  title={Groups and geometric analysis},
  subtitle={Integral geometry, invariant differential operators, and spherical functions},
  series={Mathematical Surveys and Monographs},
  publisher={Amer. Math. Soc.},
  address={Providence, R.I.},
  date={2000},
  volume={83},
  note={Corrected reprint of the 1984 original},
  review={\MR{1790156 (2001h:22001)}},
}

\bib{Iw}{book}{
  author={Iwaniec, Henryk},
  title={Spectral methods of automorphic forms},
  series={Graduate Studies in Mathematics},
  volume={53},
  edition={2},
  publisher={American Mathematical Society},
  place={Providence, RI},
  date={2002},
  review={\MR{1942691 (2003k:11085)}},
}

\bib{IwSa}{article}{
  author={Iwaniec, H.},
  author={Sarnak, P.},
  title={Perspectives on the analytic theory of {$L$}-functions},
  date={2000},
  book={
    title={GAFA 2000},
    editor={Alon, N.},
    editor={Bourgain, J.},
    editor={Connes, A.},
    editor={Gromov, M.},
    editor={Milman, V.},
    publisher={Birk\"auser},
    address={Basel},
  },
  conference={
    title={Visions in Mathematics, towards 2000},
    address={Tel Aviv University},
    date={August 25\ndash September 3, 1999},
  },
  pages={705\ndash 741},
  note={Geom. Funct. Anal., Special Volume, Part II},
  review={\MR{1826269 (2002b:11117)}},
}

\bib{JaLaRo}{article}{
  author={Jacquet, Herv{\'e}},
  author={Lapid, Erez},
  author={Rogawski, Jonathan},
  title={Periods of automorphic forms},
  date={1999},
  journal={J. Amer. Math. Soc.},
  volume={12},
  number={1},
  pages={173\ndash 240},
  review={\MR{1625060 (99c:11056)}},
}

\bib{Ji}{article}{
  author={Jiang, Dihua},
  title={Periods of automorphic forms},
  date={2007},
  book={
    title={Proceedings of the International Conference on Complex Geometry and Related Fields},
    editor={Yau, Stephen S.-T.},
    editor={Chen, Zhijie},
    editor={Wang, Jianpan},
    editor={Ten, Sheng-Li},
    series={AMS/IP Stud. Adv. Math.},
    volume={39},
    publisher={Amer. Math. Soc.},
    address={Providence, R.I.}, },
  pages={125\ndash 148},
  review={\MR{2338623 (2008k:11052)}},
}

\bib{KaMuSu}{article}{
   author={Kato, Shin-ichi},
   author={Murase, Atsushi},
   author={Sugano, Takashi},
   title={Whittaker--Shintani functions for orthogonal groups},
   journal={Tohoku Math. J. (2)},
   volume={55},
   date={2003},
   number={1},
   pages={1--64},
   review={\MR{1956080 (2003m:22020)}},
}

\bib{KrSt}{article}{
  author={Kr{\"o}tz, Bernhard},
  author={Stanton, Robert~J.},
  title={Holomorphic extensions of representations. {I}. {A}utomorphic functions},
  date={2004},
  journal={Ann. of Math. (2)},
  volume={159},
  number={2},
  pages={641\ndash 724},
  review={\MR{2081437 (2005f:22018)}},
}

\bib{Lan}{book}{
  author={Langlands, Robert~P.},
  title={On the functional equations satisfied by {E}isenstein series},
  series={Lecture Notes in Mathematics},
  publisher={Springer-Verlag},
  address={Berlin},
  date={1976},
  volume={544},
  review={\MR{0579181 (58 \#28319)}},
}

\bib{LaOf}{article}{
  author={Lapid, Erez},
  author={Offen, Omer},
  title={Compact unitary periods},
  date={2007},
  journal={Compos. Math.},
  volume={143},
  number={2},
  pages={323\ndash 338},
  review={\MR{2309989 (2008g:11091)}},
}

\bib{LaRo}{article}{
  author={Lapid, Erez},
  author={Rogawski, Jonathan},
  title={Periods of {E}isenstein series},
  date={2001},
  journal={C. R. Acad. Sci. Paris S\'er. I Math.},
  volume={333},
  number={6},
  pages={513\ndash 516},
  review={\MR{1860921 (2002k:11072)}},
}

\bib{Le}{article}{
  author={Letang, Delia},
  title={Automorphic spectral identities and applications to automorphic $L$-functions on $GL_2$},
  journal={J. Number Theory},
  volume={133},
  date={2013},
  number={1},
  pages={278--317},
  review={\MR{2981412}},
}

\bib{LiVe}{article}{
  author={Lindenstrauss, Elon},
  author={Venkatesh, Akshay},
  title={Existence and {W}eyl's law for spherical cusp forms},
  date={2007},
  journal={Geom. Funct. Anal.},
  volume={17},
  number={1},
  pages={220\ndash 251},
  review={\MR{2306657 (2008c:22016)}},
}

\bib{MiVe}{article}{
  author={Michel, Philippe},
  author={Venkatesh, Akshay},
  title={The subconvexity problem for ${\rm GL}_2$},
  journal={Publ. Math. Inst. Hautes \'Etudes Sci.},
  number={111},
  date={2010},
  pages={171--271},
  review={\MR{2653249 (2012c:11111)}},
}

\bib{MoWa}{book}{
  author={M{\oe }glin, C.},
  author={Waldspurger, J.-L.},
  title={Spectral decomposition and {E}isenstein series: {U}ne paraphrase de l'\'{E}criture},
  series={Cambridge Tracts in Mathematics},
  publisher={Cambridge Univ. Press},
  address={Cambridge},
  date={1995},
  volume={113},
  review={\MR{1361168 (97d:11083)}},
}

\bib{MuSu}{article}{
   author={Murase, Atsushi},
   author={Sugano, Takashi},
   title={Shintani function and its application to automorphic $L$-functions
   for classical groups. I. The case of orthogonal groups},
   journal={Math. Ann.},
   volume={299},
   date={1994},
   number={1},
   pages={17--56},
   review={\MR{1273075 (96c:11054)}},
}

\bib{Sak}{article}{
  author={Sakellaridis, Yiannis},
  title={Spherical varieties and integral representations of $L$-functions},
  journal={Algebra Number Theory},
  volume={6},
  date={2012},
  number={4},
  pages={611--667},
  review={\MR{2966713}},
}

\bib{SaVe}{article}{
  author={Sakellaridis, Yiannis},
  author={Venkatesh, Akshay},
  title={Periods and harmonic analysis on spherical varieties},
  date={2012},
  status={preprint},
  eprint={arXiv:1203.0039v1 [math.RT]},
}

\bib{Za}{article}{
   author={Zagier, Don},
   title={The Rankin-Selberg method for automorphic functions which are not of rapid decay},
   journal={J. Fac. Sci. Univ. Tokyo Sect. IA Math.},
   volume={28},
   date={1981},
   number={3},
   pages={415\ndash 437},
   review={\MR{656029 (83k:10056)}},
}

\end{biblist}
\end{bibdiv}

\end{document}